\title{Optimal constants of $L^2$ inequalities for closed nearly umbilical hypersurfaces in  space forms}
\author{Xu Cheng \thanks{the first author is  partially supported by CNPq and Faperj  of Brazil;}  and Areli V\'azquez Ju\'arez \thanks{the second author is  supported by CONACyT of Mexico and Capes of Brazil. }}
\newtheorem{thm}{Theorem}[section]
\newtheorem{cor}{Corollary}[section]
\newtheorem{prop}{Proposition}[section]\newtheorem{Def}{Definition}[section]
\begin{document}
\date{}
\maketitle
 \begin{abstract}
 Let $\Sigma$ be a  smooth closed hypersurface with  non-negative Ricci curvature, isometrically immersed in a space form. It has been proved in \cite{P}, \cite{CZ}, and \cite{C2} that there are  some  $L^2$ inequalities on $\Sigma$    which measure the stability of closed umbilical hypersurfaces or more generally, closed hypersurfaces with traceless Newton transformation of the second fundamental form.  
In this paper,  we prove that the constants in these  inequalities are optimal. 
 \end{abstract}

\baselineskip=14pt
\section {Introduction}

 A hypersurface $\Sigma$ is called totally umbilical if its second fundamental form $A$ is multiple of its metric $g$ at every point, that is, $A=\frac{\text{tr}A}{n}g$ on $\Sigma$.
In differential geometry, a classical  theorem
states that a closed, i.e., compact and without boundary,  totally umbilical surface isometrically immersed in the Euclidean space $\mathbb{R}^3$ must be a round sphere $\mathbb{S}^2$ and in particular, its second fundamental form $A$ is a constant multiple of the metric.  This result is  also true for hypersurfaces in $\mathbb{R}^{n+1}$.

It is interesting to discuss a quantitative version or stability  of  this theorem. De Lellis and M$\ddot{\text{u}}$ller \cite{dLM} obtained an $L^2$ estimate for closed surfaces in $\mathbb{R}^3$. Their result also has applications to foliations of asymptotically flat three manifolds by surfaces of prescribed mean curvature (\cite{M}, \cite{LM}, \cite{LMS}).  Recently, Perez \cite{P} studied the hypersurface case and proved the following theorem for convex hypersurfaces in $\mathbb{R}^{n+1}$:
 \begin{thm} (\cite{P})\label{thm-3}
Let $\Sigma$ be a smooth, closed and connected hypersurface in $\mathbb{R}^{n+1}, n\geq 2$ with induced metric $g$ and non-negative Ricci curvature, then 
\begin{equation}\label{ine-i2}\int_{\Sigma}|A-\frac{\overline{H}}{n}g|^2\leq \frac{n}{n-1}\int_{\Sigma}|A-\frac{H}{n}g|^2,
\end{equation}
and  equivalently 
\begin{equation}\label{ine-i-02}\int_{\Sigma}\left(H-\overline{H}\right)^2\leq \frac{n}{n-1}\int_{\Sigma}|A-\frac{H}{n}g|^2,
\end{equation}
where  $A$ and  $H=\text{tr} A$ denote   the second fundamental form and the mean curvature  of $\Sigma$ respectively, $\overline{H}=\frac1{\text{Vol}_n(\Sigma)}\int_{\Sigma}H$ is the average of $H$ on $\Sigma$.  In particular, the above estimates hold for smooth, closed hypersurfaces which are the boundary of a convex set in $\mathbb{R}^{n+1}$.
\end{thm}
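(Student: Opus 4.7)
First I would note that \eqref{ine-i2} and \eqref{ine-i-02} are equivalent: since the traceless tensor $A - \frac{H}{n}g$ is pointwise orthogonal (in the Hilbert--Schmidt inner product) to any multiple of $g$, a Pythagorean decomposition gives
$$\Big|A - \frac{\overline H}{n}g\Big|^2 = \Big|A - \frac{H}{n}g\Big|^2 + \frac{1}{n}(H - \overline H)^2,$$
so after integration on $\Sigma$ one passes between the two displays without loss. I would therefore aim to prove \eqref{ine-i-02}.

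The main strategy is to introduce a Poisson auxiliary function and combine the Codazzi identity with the Bochner formula. Since $\int_\Sigma (H - \overline H) = 0$, there exists a smooth $\phi$ on $\Sigma$ with $\Delta\phi = H - \overline H$. Then by two integrations by parts (valid on the closed $\Sigma$), together with the Codazzi identity $\nabla^i A_{ij} = \nabla_j H$ (which holds in any space form),
$$\int_\Sigma (H - \overline H)^2 = \int_\Sigma H\,\Delta\phi = \int_\Sigma \langle A,\,\mathrm{Hess}\,\phi\rangle.$$
Splitting each tensor into its trace and traceless parts and using that $A - \frac{H}{n}g$ is traceless, this rewrites as
$$\frac{n-1}{n}\int_\Sigma (H - \overline H)^2 = \int_\Sigma \Big\langle A - \frac{H}{n}g,\; \mathrm{Hess}\,\phi - \frac{\Delta\phi}{n}g \Big\rangle.$$

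Next I would bring in the hypothesis $\mathrm{Ric} \geq 0$ via the Bochner identity. On the closed manifold $\Sigma$,
$$\int_\Sigma (\Delta\phi)^2 = \int_\Sigma |\mathrm{Hess}\,\phi|^2 + \int_\Sigma \mathrm{Ric}(\nabla\phi, \nabla\phi),$$
so non-negative Ricci forces $\int_\Sigma |\mathrm{Hess}\,\phi|^2 \leq \int_\Sigma (\Delta\phi)^2 = \int_\Sigma (H - \overline H)^2$; subtracting the trace part of $\mathrm{Hess}\,\phi$ yields
$$\int_\Sigma \Big|\mathrm{Hess}\,\phi - \frac{\Delta\phi}{n}g\Big|^2 \leq \frac{n-1}{n}\int_\Sigma (H - \overline H)^2.$$
Applying Cauchy--Schwarz to the previous displayed identity and combining with this bound gives
$$\Big(\frac{n-1}{n}\int_\Sigma (H - \overline H)^2\Big)^{2} \leq \int_\Sigma \Big|A - \frac{H}{n}g\Big|^2 \cdot \frac{n-1}{n}\int_\Sigma (H - \overline H)^2,$$
from which \eqref{ine-i-02} follows immediately; the case where the left-hand side vanishes (so $H$ is constant) is trivial.

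The principal obstacle I foresee is not the manipulations themselves but tracking the sharp constant $\frac{n}{n-1}$ through the three successive inequalities---Bochner, Cauchy--Schwarz, and the trace/traceless split---so that the resulting constant cannot be further improved. Each of these is individually tight along an independent direction (umbilicity on the Codazzi/integration-by-parts side, proportional Hessian on the Bochner side), and pinpointing the configurations where equality is simultaneously attained is precisely what is expected to feed into the optimality results that are the subject of the rest of the paper.
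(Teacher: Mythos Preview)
The paper does not actually prove this theorem: it is quoted as Perez's result from \cite{P}, and the only remark the authors add is that a slight modification of Perez's argument yields the Einstein-ambient version (Theorem~\ref{thm-4}), referring the reader to \cite{CZ} for details. So there is no in-paper proof to compare against.

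That said, your outline is correct and is precisely the argument one expects here---it is the hypersurface analogue of the De Lellis--Topping almost-Schur computation, which is what Perez's proof and the extension in \cite{CZ} are based on. The three ingredients (solve $\Delta\phi=H-\overline H$, integrate by parts using the contracted Codazzi identity $\nabla^iA_{ij}=\nabla_jH$, then apply Bochner with $\mathrm{Ric}\ge 0$ and Cauchy--Schwarz) assemble exactly as you wrote, and the constant $\tfrac{n}{n-1}$ falls out with no slack in the bookkeeping. Your closing remark about equality cases is also on target: the paper's Theorem~\ref{thm-5} (from \cite{CZ}) addresses exactly when all three inequalities are simultaneously saturated, and the present paper then exploits the fact that the constant is sharp but unattained by nontrivial hypersurfaces to build the perturbative counterexamples of Theorems~\ref{opt-1} and~\ref{opt-2}.

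One small point worth tightening if you write this up in full: your display $\int_\Sigma(H-\overline H)^2=\int_\Sigma H\,\Delta\phi=\int_\Sigma\langle A,\mathrm{Hess}\,\phi\rangle$ silently uses $\int_\Sigma\overline H\,\Delta\phi=0$ and then two integrations by parts together with Codazzi; spelling out that intermediate step $-\int_\Sigma\langle\nabla H,\nabla\phi\rangle=-\int_\Sigma(\mathrm{div}\,A)(\nabla\phi)=\int_\Sigma\langle A,\mathrm{Hess}\,\phi\rangle$ would make the chain transparent.
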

As pointed  out by De Lellis and Topping \cite{dLT},  Perez's theorem holds even for the closed hypersufaces with nonnegative Ricci curvature when the ambient space is Einstein.   Indeed a slight modification of the  proof of Theorem  \ref{thm-3} gives the following result (see its proof in \cite{CZ}).
\begin{thm}  \label{thm-4}Let $(M^{n+1}, \tilde{g})$ be an Einstein manifold, $ n\geq 2$.
Let $\Sigma$ be a smooth, closed and connected hypersurface immersed in $M$ with induced metric $g$. Assume that $(\Sigma,g)$ has non-negative Ricci curvature.  Then 
\begin{equation}\label{ine-i3}\int_{\Sigma}|A-\frac{\overline{H}}{n}g|^2\leq \frac{n}{n-1}\int_{\Sigma}|A-\frac{H}{n}g|^2,
\end{equation}
and equivalently 
\begin{equation}\label{ine-i-03}\int_{\Sigma}\left(H-\overline{H}\right)^2\leq \frac{n}{n-1}\int_{\Sigma}|A-\frac{H}{n}g|^2,
\end{equation}
where $A$, $H$ and $\overline{H}$ are as in Theorem \ref{thm-3}. 
\end{thm}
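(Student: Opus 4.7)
My plan is to first reduce (\ref{ine-i3}) to (\ref{ine-i-03}) via an orthogonal decomposition, and then to prove (\ref{ine-i-03}) by combining a Reilly-type integration by parts with a Bochner estimate. The equivalence rests on the pointwise orthogonal splitting
$$A-\tfrac{\overline H}{n}g \;=\; \bigl(A-\tfrac{H}{n}g\bigr) \;+\; \tfrac{H-\overline H}{n}\,g,$$
whose two summands are pointwise orthogonal (traceless versus pure trace). Integrating yields $\int_{\Sigma}|A-\tfrac{\overline H}{n}g|^{2} = \int_{\Sigma}|A-\tfrac{H}{n}g|^{2} + \tfrac{1}{n}\int_{\Sigma}(H-\overline H)^{2}$, which converts (\ref{ine-i3}) into (\ref{ine-i-03}); it therefore suffices to establish the latter.

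Set $\phi := A-\tfrac{H}{n}g$. The Einstein hypothesis on $M$ is used to remove the ambient-Ricci error term from the contracted Codazzi equation, yielding $\text{div}(A)=\nabla H$ on $\Sigma$ and hence $\text{div}(\phi)=\tfrac{n-1}{n}\nabla H$. Since $H-\overline H$ has zero mean, I would solve the Poisson equation $\Delta f = H-\overline H$. Integrating by parts first against $H-\overline H$ and then against $\phi$ produces the key identity
$$\int_{\Sigma}(H-\overline H)^{2} \;=\; -\int_{\Sigma}\langle\nabla H,\nabla f\rangle \;=\; \tfrac{n}{n-1}\int_{\Sigma}\bigl\langle \phi,\; \nabla^{2}f - \tfrac{\Delta f}{n}g\bigr\rangle,$$
where in the last step I have used that $\phi$ is traceless to subtract the pure-trace piece of $\nabla^{2}f$ at no cost. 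Cauchy--Schwarz then gives
$$\int_{\Sigma}(H-\overline H)^{2} \;\le\; \tfrac{n}{n-1}\Bigl(\int_{\Sigma}|\phi|^{2}\Bigr)^{1/2}\Bigl(\int_{\Sigma}\bigl|\nabla^{2}f-\tfrac{\Delta f}{n}g\bigr|^{2}\Bigr)^{1/2}.$$

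To close the argument I would bound the traceless Hessian of $f$ in terms of $\Delta f$. Integrating the Bochner identity $\tfrac{1}{2}\Delta|\nabla f|^{2} = |\nabla^{2}f|^{2} + \langle\nabla\Delta f,\nabla f\rangle + \text{Ric}(\nabla f,\nabla f)$ over the closed $\Sigma$ yields $\int_{\Sigma}|\nabla^{2}f|^{2} = \int_{\Sigma}(\Delta f)^{2} - \int_{\Sigma}\text{Ric}(\nabla f,\nabla f)$, and the hypothesis $\text{Ric}\ge 0$ forces $\int_{\Sigma}|\nabla^{2}f|^{2}\le \int_{\Sigma}(\Delta f)^{2}$. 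Removing the pure-trace part gives
$$\int_{\Sigma}\bigl|\nabla^{2}f-\tfrac{\Delta f}{n}g\bigr|^{2} \;\le\; \tfrac{n-1}{n}\int_{\Sigma}(\Delta f)^{2} \;=\; \tfrac{n-1}{n}\int_{\Sigma}(H-\overline H)^{2}.$$
Substituting this into the Cauchy--Schwarz bound and squaring yields (\ref{ine-i-03}) with constant $n/(n-1)$, which is the desired result.

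The delicate point is bookkeeping the sharp constant: the factor $\tfrac{n}{n-1}$ contributed by the divergence identity $\text{div}(\phi)=\tfrac{n-1}{n}\nabla H$ must combine with the factor $\tfrac{n-1}{n}$ coming from the traceless-Hessian bound to give exactly $n/(n-1)$ after Cauchy--Schwarz. Both factors stem from the same structural move (passing to the trace-free part of a symmetric tensor), and the Einstein assumption is precisely what guarantees that the contracted Codazzi equation contributes no additional Ricci term that would spoil this balance.
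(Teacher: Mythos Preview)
Your proposal is correct and is precisely the De Lellis--Topping/Perez argument: contracted Codazzi (using the Einstein hypothesis to kill the normal ambient-Ricci term) gives $\mathrm{div}\,\phi=\tfrac{n-1}{n}\nabla H$, then one pairs against the solution of $\Delta f=H-\overline H$, applies Cauchy--Schwarz, and closes with the Bochner inequality under $\mathrm{Ric}\ge 0$. The paper does not reproduce a proof of this theorem but simply refers to \cite{P} and \cite{CZ}, where exactly this method is carried out; your write-up matches that approach.
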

Later,  Zhou and the first author  (\cite{CZ}) studied the rigidity of the equalities  in  Inequalities (\ref{ine-i2}),  (\ref{ine-i-02}), (\ref{ine-i3}) and (\ref{ine-i-03}). They proved that
\begin{thm}\label{thm-5} (\cite{CZ})  Let $M^{n+1}$ be  the Euclidean space $\mathbb{R}^{n+1}$, or the Euclidean semi-sphere $\mathbb{S}^n_{+}$ or the  Hyperbolic space $\mathbb{H}^{n+1}$, $n\geq 2$.  Let $\Sigma$ be a smooth, connected, oriented and closed  hypersurface immersed in  $M^{n+1}$ with induced metric $g$. Assume that $(\Sigma,g)$ has non-negative Ricci curvature. Then, 
\begin{equation}\label{eq-i-1} \int_{\Sigma}|A-\frac{\overline{H}}{n}g|^2=\frac{n}{n-1}\int_{\Sigma}|A-\frac{H}{n}g|^2,
\end{equation}
and  equivalently
\begin{equation}\label{eq-i-01}\int_{\Sigma}\left(H-\overline{H}\right)^2=\frac{n}{n-1}\int_{\Sigma}|A-\frac{H}{n}g|^2,
\end{equation}
 holds if and only if $\Sigma$ is a totally umbilical hypersurface,  that is, $\Sigma$ is a distance sphere $S^n$ in $M^{n+1}$, where $\overline{H}=\frac1{\text{Vol}_n(\Sigma)}\int_{\Sigma}H$.

\end{thm}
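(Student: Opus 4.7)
The plan is to re-examine the proof of Theorem \ref{thm-4} and extract the equality cases. The easy direction is immediate: a distance sphere $S^n\subset M^{n+1}$ is totally umbilical with constant principal curvature, so $A=\frac{H}{n}g$ and $H\equiv\overline{H}$, making both sides of (\ref{eq-i-1}) and (\ref{eq-i-01}) vanish.

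For the converse, assume equality holds in (\ref{eq-i-01}). Since $\int_\Sigma(H-\overline H) = 0$, we may solve $\Delta u = H-\overline H$ on $\Sigma$ for some $u\in C^\infty(\Sigma)$. The proof of (\ref{ine-i-03}) rests on three ingredients: first, the Codazzi equation $\nabla^j A_{ij}=\nabla_i H$ (valid in a space form) together with two integrations by parts produces the identity
$$
\frac{n-1}{n}\int_\Sigma (H-\overline H)^2 = \int_\Sigma \bigl\langle \mathring{A},\mathring{\nabla}^2 u\bigr\rangle,
$$
where $\mathring{T}:=T-\frac{\operatorname{tr} T}{n}g$ denotes the trace-free part; second, integrating Bochner's formula together with $\operatorname{Ric}\geq 0$ yields $\int_\Sigma |\mathring{\nabla}^2 u|^2\leq \frac{n-1}{n}\int_\Sigma(H-\overline H)^2$; third, Cauchy--Schwarz in $L^2$ combines these to give (\ref{ine-i-03}).

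Tracking the equality cases forces: (a) $\operatorname{Ric}(\nabla u,\nabla u)\equiv 0$ on $\Sigma$, and (b) the pointwise proportionality $\mathring{A} = c\,\mathring{\nabla}^2 u$ for some constant $c\geq 0$. Comparing the $L^2$-norms of the two sides of (b) with the equalities arising from the previous step gives the chain $c^2\int_\Sigma|\mathring{\nabla}^2 u|^2 = \int_\Sigma|\mathring{A}|^2 = \int_\Sigma|\mathring{\nabla}^2 u|^2$, so either $\int_\Sigma|\mathring{A}|^2 = 0$ or $c=1$. In the first subcase $\mathring{A}\equiv 0$, whence $\Sigma$ is totally umbilical; the classical rigidity for closed umbilical hypersurfaces in a space form then identifies $\Sigma$ with a distance sphere.

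The main obstacle is the remaining subcase $c=1$ with $\mathring{A}\not\equiv 0$. Here one has the pointwise identity $A = \nabla^2 u + \frac{\overline H}{n}g$. Substituting this into the Codazzi equation $\nabla_i A_{jk}=\nabla_j A_{ik}$ and invoking the Ricci identity for $\nabla^3 u$ gives the strong condition $R_{ijkl}\nabla^l u\equiv 0$ on $\Sigma$. Combining this with the Gauss equation $R_{ijkl}=\kappa(g_{ik}g_{jl}-g_{il}g_{jk})+A_{ik}A_{jl}-A_{il}A_{jk}$ of the immersion $\Sigma\hookrightarrow M^{n+1}_\kappa$, and computing in a principal frame of $A$ at a point where $\nabla u\neq 0$, one reduces to the algebraic constraint $(\kappa+\lambda_i\lambda_j)(\delta_{ik}u_j-\delta_{jk}u_i)=0$ for all $i,j,k$. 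A case analysis on which components $u_i$ are nonzero, together with (a) upgraded to $\operatorname{Ric}(\nabla u,\cdot)\equiv 0$ (via Cauchy--Schwarz for the positive semi-definite tensor $\operatorname{Ric}$) and the identity $A = \nabla^2 u + \frac{\overline H}{n}g$, should pin down the principal curvatures tightly enough to force $\nabla u\equiv 0$, contradicting $\Delta u=H-\overline H\not\equiv 0$. Hence this subcase cannot occur, and $\Sigma$ must be totally umbilical, completing the rigidity.
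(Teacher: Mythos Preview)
This theorem is not proved in the present paper: it is quoted from \cite{CZ} as background (note the ``(\cite{CZ})'' attribution in the statement), and the paper under review supplies no argument for it. Consequently there is no proof here against which to compare your proposal.

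As an independent remark on the proposal itself: your reduction is correct through the dichotomy $\mathring A\equiv 0$ versus $c=1$, and the derivation of $R_{ijkl}\nabla^l u=0$ from $A=\nabla^2 u+\tfrac{\overline H}{n}g$ via Codazzi and the Ricci identity is sound. The gap is the handling of the subcase $c=1$ with $\mathring A\not\equiv 0$. In a principal frame the constraint $(\kappa+\lambda_i\lambda_j)u_j=0$ for $i\neq j$ does \emph{not} by itself force $\nabla u=0$ at a point: it admits nontrivial solutions (e.g.\ $\lambda_j=0$ when $\kappa=0$, or $\lambda_i\lambda_j=-\kappa$ when $\kappa<0$), so the phrase ``should pin down the principal curvatures tightly enough to force $\nabla u\equiv 0$'' conceals genuine work. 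Ruling out this subcase requires a further global argument---using closedness of $\Sigma$, the specific ambient geometry of $\mathbb{R}^{n+1}$, $\mathbb{S}^{n+1}_+$, $\mathbb{H}^{n+1}$, or an Obata-type rigidity for the equation $\nabla^2 u=A-\tfrac{\overline H}{n}g$---none of which you carry out. As written, the final paragraph is a plausible plan rather than a proof.
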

In \cite{CZ}, the authors also studied the general case for hypersurfaces without assumption on convexity (that is, $A\geq 0$, which is equivalent to $\text{Ric}\geq 0$ when $\Sigma$ is a closed hypersurface in $\mathbb{R}^{n+1}$).  
See more details in \cite{CZ}.

Also in \cite{P}, Perez showed the constants in Inequalities (\ref{ine-i2}) and (\ref{ine-i-02})  are sharp. In this paper, we generalize his result and prove that the constants in Inequalities (\ref{ine-i3}) and (\ref{ine-i-03}) are  sharp when the ambient spaces are other space forms. Precisely, we prove that
 \begin{thm}  \label{opt-1} Let $(M_c^{n+1}, \tilde{g})$ be a space form of constant sectional curvature $c$, $ n\geq 2$. Let  $C<\sqrt{\frac{n}{n-1}}$ be a positive constant. Then   there exists a smooth deformation $\Sigma_t$ of the geodesic sphere $S^n$ in $M_c^{n+1}$ so that for each hypersurface $\Sigma_t$, 

\begin{equation}\label{ine-opt1}\int_{\Sigma_t}\left(H-\overline{H}\right)^2> C^2\int_{\Sigma_t}|A-\frac{H}{n}g|^2,
\end{equation}
and 
\begin{equation}\label{ine-opt1-1}\int_{\Sigma_t}|A-\frac{\overline{H}}{n}g| ^2> C^2\int_{\Sigma_t}|A-\frac{H}{n}g|^2.
\end{equation}

\noindent Moreover, $\Sigma_t$ can be chosen arbitrarily close to $S^n$ so that the Ricci curvature $\text{Ric}_{\Sigma_t}$ of $\Sigma_t$ is positive.

\end{thm}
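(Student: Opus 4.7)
The plan is to construct $\Sigma_t$ as a normal exponential graph over a geodesic sphere $S^n\subset M_c^{n+1}$, using a high-frequency Laplace eigenfunction as the defining function. Since $S^n$ is precisely the equality case of the inequalities in Theorems \ref{thm-4} and \ref{thm-5}, both sides of (\ref{ine-opt1}) vanish at $t=0$, so the task reduces to analyzing their leading $t^2$-order coefficients. My expectation is that along a fixed eigenmode the leading ratio approaches $n/(n-1)$ as the eigenvalue degree $k\to\infty$, which is what forces the strict inequality for any prescribed $C^2<n/(n-1)$.

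In detail, fix a geodesic sphere $S^n$ of geodesic radius $r_0$ in $M_c^{n+1}$; its induced metric is the round metric of intrinsic radius $R=\sigma_c(r_0)$, and its principal curvature $\kappa$ satisfies $\kappa^2+c=1/R^2$. Pick a mean-zero Laplace eigenfunction $\phi_k$ on $S^n$ with $\Delta\phi_k=-\mu_k\phi_k$, $\mu_k=k(k+n-1)/R^2$, and set $\Sigma_t=\{\exp_p(t\phi_k(p)\nu(p)):p\in S^n\}$. The standard first-variation formulas in a space form give
\[
\partial_t H\big|_{t=0}=-\Delta\phi_k-(|A|^2+\operatorname{Ric}_M(\nu,\nu))\phi_k=(\mu_k-n(\kappa^2+c))\phi_k=\tfrac{(k-1)(k+n)}{R^2}\phi_k,
\]
\[
\partial_t\bigl(A-\tfrac{H}{n}g\bigr)_{ij}\big|_{t=0}=-\bigl(\nabla_i\nabla_j\phi_k-\tfrac{1}{n}\Delta\phi_k\, g_{ij}\bigr),
\]
the second formula exploiting the umbilicity of $S^n$ so that the ambient-curvature corrections cancel in the traceless part. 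The Bochner identity on $(S^n,g_{S^n})$ then yields
\[
\int_{S^n}\bigl|\nabla^2\phi_k-\tfrac{1}{n}\Delta\phi_k\,g\bigr|^2=\tfrac{(n-1)\mu_k\bigl(\mu_k-\tfrac{n-1}{R^2}\bigr)}{n}\|\phi_k\|^2,
\]
so the leading-order quotient in (\ref{ine-opt1}) equals
\[
\mathcal{R}_k=\frac{n(k-1)^2(k+n)^2}{(n-1)k(k+n-1)\bigl(k^2+(n-1)k-(n-1)\bigr)}\;\longrightarrow\;\frac{n}{n-1}\quad\text{as }k\to\infty.
\]

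Given $C^2<n/(n-1)$, the plan is to choose $k$ large enough that $\mathcal{R}_k>C^2$, so that (\ref{ine-opt1}) persists as a strict inequality for all sufficiently small $t\neq 0$, since the higher-order terms in $t$ are subdominant. The companion inequality (\ref{ine-opt1-1}) will follow immediately from the algebraic identity
\[
\int_{\Sigma_t}\bigl|A-\tfrac{\overline H}{n}g\bigr|^2=\int_{\Sigma_t}\bigl|A-\tfrac{H}{n}g\bigr|^2+\tfrac{1}{n}\int_{\Sigma_t}(H-\overline H)^2
\]
combined with (\ref{ine-opt1}) and the elementary inequality $1+C^2/n>C^2$, which holds exactly when $C^2<n/(n-1)$. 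Positivity of $\operatorname{Ric}_{\Sigma_t}$ for small $t$ is automatic by continuity, since $S^n$ has strictly positive constant Ricci curvature.

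The step I expect to be most delicate is carefully recording the first-variation formulas in a general space form, in particular verifying that the ambient-curvature contributions in $\partial_t A$ combine with the variations of $H$ and of the induced metric to leave only the traceless Hessian in $\partial_t(A-\tfrac{H}{n}g)$. Once that cancellation is pinned down, the rest of the argument is a limit computation for $\mathcal{R}_k$ and the routine observation that higher-order corrections in $t$ are absorbed by the strict sign of the leading coefficient.
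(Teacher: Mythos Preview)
Your proposal is correct and follows essentially the same route as the paper: normal deformation of a geodesic sphere by a high-frequency Laplace eigenfunction, a second-order Taylor analysis via the evolution equations and the Bochner formula on the round sphere, the limit $\mathcal{R}_k\to n/(n-1)$ as $k\to\infty$, and then the algebraic identity for (\ref{ine-opt1-1}) together with continuity for the Ricci positivity. One arithmetic slip worth fixing: the Bochner identity on $S^n$ gives $\int_{S^n}|\mathring{\nabla}^2\phi_k|^2=\tfrac{n-1}{n}\,\mu_k\bigl(\mu_k-\tfrac{n}{R^2}\bigr)\|\phi_k\|^2$ (with $n/R^2$, not $(n-1)/R^2$), which simplifies your ratio to $\mathcal{R}_k=\dfrac{n(k-1)(k+n)}{(n-1)k(k+n-1)}$; the limit and the argument are unaffected.
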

Theorem \ref{opt-1} shows that the constant $\sqrt{\frac{n}{n-1}}$ in Inequalities (\ref{ine-i3}) and (\ref{ine-i-03})  is optimal when the ambient space is a space form.

In this paper, we also deal with higher order mean curvatures and the Newton transformations of the second fundamental form of hypersurfaces (see their definition in Section \ref{sec-notation}. Besides  we refer the interested readers to \cite{R}, \cite{Ro}, \cite{CR}, \cite{Ro2}, \cite{C2} and  the related references therein).

When $(\Sigma, g)$ is a hypersurface isometrically immersed in a space form,
 it can be verified  that if the Newton transformations $P_r$ satisfy $P_r=\frac{\text{tr}P_r}{n}g$ on $\Sigma$, then $\Sigma$ has constant $r$th mean curvature and thus $P_r$ is a constant multiple of the metric $g$ (cf Section \ref{sec-rmean}). Moreover, Ros' work (\cite{Ro1},\cite{Ro2}) implies 
that  the round spheres are the only closed embedded hypersurfaces  in  $\mathbb{R}^{n+1}$ with $P_r=\frac{\text{tr}P_r}{n}g$, $2\leq r\leq n$. Like the case for the totally umbilical theorem, we  may  consider a quantitative version or stability of this result.
Recently, the first author   \cite{C2} showed  that
 \begin{thm} \cite{C2} \label{thm-rm2} Let $(M_c^{n+1}, \tilde{g})$ be a space form of constant sectional curvature $c$, $ n\geq 2$.
Let $\Sigma$ be a smooth connected  closed  hypersurface immersed in $M_c^{n+1}$ with induced metric $g$. Assume that  $(\Sigma,g)$ has nonnegative Ricci curvature,  then  for $2\leq r\leq n$,
\begin{equation}\label{ine-rm03}(n-r)^2\int_{\Sigma}\left(s_r-\overline{s_r}\right)^2\leq n(n-1)\int_{\Sigma}|P_r-\frac{(n-r)s_r}{n}g|^2,
\end{equation}
and equivalently, 
\begin{equation}\label{ine-rm4}\int_{\Sigma}|P_r-\frac{(n-r)\overline{s_r}}{n}g|^2\leq n\int_{\Sigma}|P_r-\frac{(n-r)s_r}{n}g|^2,
\end{equation}
where  $P_r$ and  $s_r=\text{tr}P_r$  denote the Newton transformations of the second fundamental form $A$ of $\Sigma$ and  the trace of $P_r$ respectively,  $\overline{s_r}=\frac{\int_M s_rdv}{\text{Vol}(M)}$ denotes the average of $s_r$ over $\Sigma$.
\end{thm}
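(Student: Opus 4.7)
The plan is to adapt the Bochner--Reilly argument used in the $r=1$ case (Theorems~\ref{thm-3}--\ref{thm-4}) to the higher order Newton transformation, exploiting the fact that $P_r$ is divergence-free in a space form.

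First I would record the key structural identity: in a space form, the Codazzi equation together with the recursion for the Newton transformations gives $\mathrm{div}\,P_r=0$ (this is classical; it is what makes $P_r$ behave as a ``good'' substitute for the first fundamental form in Minkowski-type formulas). Writing $\widetilde P_r := P_r-\tfrac{(n-r)s_r}{n}g$ for the traceless part and using $\mathrm{div}(fg)=\nabla f$ for any function $f$, I would conclude
\begin{equation*}
\mathrm{div}\,\widetilde P_r \;=\;-\tfrac{n-r}{n}\,\nabla s_r,
\end{equation*}
which is the analogue of $\tfrac{n-1}{n}\nabla H=\mathrm{div}(A-\tfrac{H}{n}g)$ used in the $r=1$ proof.

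Next, since $\int_{\Sigma}(s_r-\overline{s_r})=0$, elliptic theory produces a smooth $f$ on the closed manifold $(\Sigma,g)$ solving $\Delta f=s_r-\overline{s_r}$ with $\int_\Sigma f=0$. I would then compute
\begin{equation*}
\int_{\Sigma}(s_r-\overline{s_r})^2=\int_{\Sigma}s_r\,\Delta f=-\int_{\Sigma}\langle\nabla s_r,\nabla f\rangle=-\tfrac{n}{n-r}\int_{\Sigma}\langle \widetilde P_r,\mathrm{Hess}\,f\rangle,
\end{equation*}
where the last step integrates by parts using the divergence identity above. Because $\widetilde P_r$ is traceless, $\mathrm{Hess}\,f$ can be replaced by its traceless part $\mathrm{Hess}\,f-\tfrac{\Delta f}{n}g$ without changing the inner product, and then Cauchy--Schwarz yields
\begin{equation*}
\int_{\Sigma}(s_r-\overline{s_r})^2\le \tfrac{n}{n-r}\Bigl(\int_{\Sigma}|\widetilde P_r|^2\Bigr)^{\!1/2}\!\Bigl(\int_{\Sigma}|\mathrm{Hess}\,f-\tfrac{\Delta f}{n}g|^2\Bigr)^{\!1/2}.
\end{equation*}

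Now I invoke the hypothesis $\mathrm{Ric}_\Sigma\ge 0$ via the Bochner formula applied to $f$: integrating $\tfrac{1}{2}\Delta|\nabla f|^2=|\mathrm{Hess}\,f|^2+\langle\nabla\Delta f,\nabla f\rangle+\mathrm{Ric}(\nabla f,\nabla f)$ gives $\int|\mathrm{Hess}\,f|^2\le\int(\Delta f)^2$, hence
\begin{equation*}
\int_{\Sigma}|\mathrm{Hess}\,f-\tfrac{\Delta f}{n}g|^2\le\tfrac{n-1}{n}\int_{\Sigma}(\Delta f)^2=\tfrac{n-1}{n}\int_{\Sigma}(s_r-\overline{s_r})^2.
\end{equation*}
Substituting back and cancelling $(\int(s_r-\overline{s_r})^2)^{1/2}$ gives $(n-r)^2\int(s_r-\overline{s_r})^2\le n(n-1)\int|\widetilde P_r|^2$, which is (\ref{ine-rm03}). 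The equivalence with (\ref{ine-rm4}) then follows from the orthogonal decomposition $\int|P_r-\tfrac{(n-r)\overline{s_r}}{n}g|^2=\int|\widetilde P_r|^2+\tfrac{(n-r)^2}{n}\int(s_r-\overline{s_r})^2$, since $\widetilde P_r$ is traceless while $\tfrac{(n-r)(s_r-\overline{s_r})}{n}g$ is a pure-trace tensor.

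The step I expect to carry the most weight is the divergence identity $\mathrm{div}\,P_r=0$, which is where the space-form hypothesis enters in an essential way; everything downstream is the same Reilly--Bochner mechanism as in the $r=1$ proof, with the sole additional bookkeeping that the coefficient coming from the trace of $P_r$ is $(n-r)$ rather than $(n-1)$, which is exactly what produces the factor $(n-r)^2$ on the left-hand side.
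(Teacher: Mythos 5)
Your argument is correct and is essentially the proof of the cited source \cite{C2} (the paper itself states Theorem \ref{thm-rm2} without proof): the divergence-free property of $P_r$ in a space form, fed into the same Reilly--Bochner mechanism used for $r=1$, with the trace factor $n-r$ replacing $n-1$. The only caveat is the degenerate case $r=n$, where your division by $n-r$ breaks down, but there $P_n=0$ so both sides of \eqref{ine-rm03} vanish and the inequality is trivial.
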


In Section \ref{sec-rmean} of this paper, we consider the optimality  of the constants  for Inequalities (\ref{ine-rm03}) and (\ref{ine-rm4}) and prove that 
\begin{thm}  \label{opt-2} Let $(M_c^{n+1}, \tilde{g})$ be a space form of constant sectional curvature $c$, $ n\geq 2$. Let  the natural number $r$ ($2\leq r\leq n-1$) be given.  For any given constants $C_1<\sqrt{\frac{n(n-1)}{(n-r)^2}}$ and $C_2<\sqrt{n}$, there exist  smooth deformations $(\Sigma_1)_t$ and $(\Sigma_2)_t$ of the geodesic sphere $S^n$ in $M_c^{n+1}$ respectively, so that for each $t$,
\begin{equation}\label{ine-opt2}\int_{(\Sigma_1)_t}\left(s_r-\overline{s_r}\right)^2> C_1^2\int_{(\Sigma_1)_t}|P_r-\frac{(n-r)s_r}{n}g|^2,
\end{equation} 
and 
\begin{equation}\label{ine-opt3}\int_{(\Sigma_2)_t}|P_r-\frac{(n-r)\overline{s_r}}{n}g|^2> C^2_2\int_{(\Sigma_2)_t}|P_r-\frac{(n-r)s_r}{n}g|^2,
\end{equation}
\noindent where $P_r, s_r$ and $ \overline{s_r}$ are given as in Theorem \ref{thm-rm2}. 

Moreover, $(\Sigma_1)_t$ and $(\Sigma_2)_t$ can be chosen arbitrarily close to $S^n$ so that the Ricci curvatures $\text{Ric}_{(\Sigma_i)_t}$ of $(\Sigma_i)_t$, $i=1,2$ are positive.

\end{thm}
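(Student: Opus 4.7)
The plan is to take both $(\Sigma_1)_t$ and $(\Sigma_2)_t$ to be the same one-parameter family of normal graphs over a geodesic sphere $S^n\subset M_c^{n+1}$, generated by a Laplace eigenfunction of high enough degree. Write $F_t(p)=\exp_p(t\phi(p)\nu(p))$ and $\Sigma_t=F_t(S^n)$, where $\phi$ is a smooth function on $S^n$ to be specified. Since $S^n$ is totally umbilical with common principal curvature $k$, the Newton transformation $P_r$ is a scalar multiple of the metric on $S^n$, so both sides of (\ref{ine-opt2}) and (\ref{ine-opt3}) vanish identically at $t=0$. The goal is to expand both sides to leading order (which turns out to be $t^2$), compare the leading coefficients, and choose $\phi$ so that the resulting ratio exceeds $C_1^2$ (respectively $C_2^2$).

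On $S^n$, $A=k\,I$ commutes with every endomorphism, so the Newton recursion $P_r=s_r I-AP_{r-1}$ yields inductively, at $t=0$,
$$(\dot P_r)^T = -\binom{n-2}{r-1}k^{r-1}[\operatorname{Hess}\phi]^T,\qquad \dot s_r=\binom{n-1}{r-1}k^{r-1}\dot H,$$
where $[\,\cdot\,]^T$ denotes the trace-free part and $\dot H=\Delta\phi+n(k^2+c)\phi$ is the standard first variation of the mean curvature under normal variation in a space form. Take $\phi$ to be a Laplace eigenfunction on $S^n$ of degree $\ell\ge 2$, i.e.\ $-\Delta\phi=\nu_\ell\phi$ with $\nu_\ell=\ell(\ell+n-1)K$ and $K=c+k^2$ the intrinsic sectional curvature of $S^n$; then $\dot H=-(\nu_\ell-nK)\phi$ and $\overline{\dot s_r}=0$.

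The Bochner identity on $S^n$ (whose Ricci curvature is $(n-1)K\,g$), applied to the eigenfunction $\phi$, gives
$$\int_{S^n}\big|[\operatorname{Hess}\phi]^T\big|^2=\frac{(n-1)\,\nu_\ell\,(\nu_\ell-nK)}{n}\int_{S^n}\phi^2.$$
Combining this with the two formulas above and using $\binom{n-1}{r-1}/\binom{n-2}{r-1}=(n-1)/(n-r)$, one finds
$$\frac{\int_{S^n}(\dot s_r-\overline{\dot s_r})^2}{\int_{S^n}|(\dot P_r)^T|^2}=\frac{n(n-1)}{(n-r)^2}\left(1-\frac{n}{\ell(\ell+n-1)}\right).$$
The factor in parentheses is strictly less than $1$ and increases to $1$ as $\ell\to\infty$. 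Hence given $C_1<\sqrt{n(n-1)/(n-r)^2}$, fix $\ell$ large enough that this ratio already exceeds $C_1^2$; by continuity in $t$, the ratio on $\Sigma_t$ agrees with this leading ratio up to an $O(t)$ error, so (\ref{ine-opt2}) holds strictly on $\Sigma_t$ for every sufficiently small $t\neq 0$. Positivity of $\operatorname{Ric}_{\Sigma_t}$ for small $t$ is then automatic from $\operatorname{Ric}_{S^n}=(n-1)K\,g>0$ and the continuous dependence of the induced metric and shape operator on $t$.

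For (\ref{ine-opt3}) the same family $(\Sigma_2)_t:=\Sigma_t$ works, thanks to the pointwise orthogonal decomposition
$$\left|P_r-\tfrac{(n-r)\overline{s_r}}{n}g\right|^2=\left|P_r-\tfrac{(n-r)s_r}{n}g\right|^2+\tfrac{(n-r)^2}{n}(s_r-\overline{s_r})^2,$$
which holds because $P_r-\frac{(n-r)s_r}{n}g$ is pointwise trace-free. Dividing by the first term on the right and using the limit computed above, the ratio of the two sides of (\ref{ine-opt3}) approaches $1+\frac{(n-r)^2}{n}\cdot\frac{n(n-1)}{(n-r)^2}=n$, so any $C_2<\sqrt n$ is exceeded for $\ell$ large and $t$ small. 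The main technical obstacle is the inductive computation of $(\dot P_r)^T$ via the Newton recursion: while every step is routine once one exploits that $A$ is a scalar operator on $S^n$, simultaneously tracking the trace part $\dot s_r\,I$ and the combinatorial constant $\binom{n-2}{r-1}k^{r-1}$ in front of the trace-free Hessian requires a careful Pascal-type induction. The Bochner identity on the sphere and the orthogonal decomposition above are standard.
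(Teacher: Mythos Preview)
Your proposal is correct and follows essentially the same approach as the paper: deform a geodesic sphere normally by a high-degree Laplace eigenfunction, compute the first variations $\partial_t\text{\r P}_r=\lambda^{r-1}\binom{n-2}{r-1}\text{\r{Hess}}\,f$ and $\partial_t s_r=\binom{n-1}{r-1}\lambda^{r-1}\partial_t H$ at $t=0$, apply Bochner on the round sphere to evaluate $\int|\text{\r{Hess}}\,f|^2$, and let the eigenvalue tend to infinity. The paper packages this as showing that the functional $\mathcal{G}(\Sigma_t)=C^2\int|\text{\r P}_r|^2-\int(s_r-\overline{s_r})^2$ has vanishing value and first derivative and strictly negative second derivative at $t=0$, whereas you phrase it as computing the limiting ratio $\frac{n(n-1)}{(n-r)^2}\bigl(1-\frac{n}{\ell(\ell+n-1)}\bigr)$ of the leading $t^2$ coefficients; these are equivalent formulations of the same computation, and your derivation of (\ref{ine-opt3}) via the orthogonal decomposition is exactly the paper's argument as well.
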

Observe that by $P_1=s_1I-A$ and $ s_1=H$,  Theorem \ref{opt-1} shows that Theorem \ref{opt-2} also holds for $r=1$.

It is worth of mentioning that there is a parallel phenomenon in the clue of Riemannian geometry. Recall that  the Schur's theorem states that  the scalar curvature of an Einstein manifold of dimension $n \geq 3$ must be constant. One may consider the stability of Schur's theorem. See some work on this topic in \cite{dLT}, \cite{GWX}, \cite{GW1}, \cite{GW2}, \cite{C1}, \cite{C2}.

The rest of this paper is organized as follows. In Section  \ref{sec-notation}, we give some notation and convention.  In particular, we give the definitions of Newton transformation and $r$-th mean curvatures associated to the second fundamental form.  In Section \ref{sec-var}, we prove the  existence of a smooth normal deformation which is needed in the next two sections. In Section \ref{sec-mean}, we give some evolution equations and prove Theorem \ref{opt-1}. In Section \ref{sec-rmean}, we prove Theorem \ref{opt-2}.
\bigskip

\section{Notation and convention}\label{sec-notation}
In order to give the definition of high order mean curvatures and the Newton transformation associated with the second fundamental form of a hypersurface, which was introduced by Reilly \cite{R}  (cf. \cite{Ro}),
we first recall the definitions of   the $r$th elementary symmetric functions and Newton transformations.
 Let $\sigma_r:\mathbb{R}^r\rightarrow \mathbb{R}$ denote the elementary symmetric function in $\mathbb{R}^n$ given by
$$\sigma_r(x_{1},\ldots,x_{n})=\displaystyle\sum_{i_1<\cdots<i_r}x_{i_1}\cdots x_{i_r}, 1\leq r\leq n.$$
Let $V$ be an $n$-dimensional vector space and  $A:  V\rightarrow V$ be a symmetric linear transformation.  Let $e_1,\ldots, e_n$ denote the orthonormal eigenvectors of $A$ and $\eta_1, \ldots, \eta_n$ denote
the eigenvalues satisfying $Ae_i=\eta_ie_i$, $i=1,\ldots,n$    respectively. 
\begin{Def}\label{elementary}Define the $r$th symmetric functions $\sigma_r(A)$, simply  denoted by $\sigma_r$, associated with $A$ by 
\begin{eqnarray}
\sigma_0&=&1,
\\ \sigma_r&=&\sigma_r(\eta_{1},\ldots, \eta_{n}), 1\leq r\leq n.
\end{eqnarray}
\end{Def}
Also define the  Newton
transformations $P_r: V\rightarrow V, $ associated with $A, 0\leq r\leq n,$  as
\begin{eqnarray}
P_0&=&I\quad  (\text{Identidade}),\\
P_r&=&\displaystyle\sum_{j=0}^r(-1)^j\sigma_{r-j}A^j\nonumber
\\
&=&\sigma_rI-\sigma_{r-1}A+...+(-1)^rA^r, \quad r=1,\ldots,n.\label{def-pr}
\end{eqnarray}
By definition, 
$P_r=\sigma_rI-AP_{r-1},  P_n=0.$ 
 It was proved in  \cite{R} that  $P_r$ has the following basic properties:
\begin{eqnarray}
&(i)&\text{tr}(P_r)
=(n-r)\sigma_r;\label{tr-1}
\\&(ii)&\text{tr}(AP_r)
=(r+1)\sigma_{r+1};\label{tr-2}
\\&(iii)& \text{tr} (A^2P_r)=\sigma_1\sigma_{r+1}-(r+2)\sigma_{r+2},\label{tr-3}
\end{eqnarray}
where $\text{tr}$ denotes the trace of the corresponding transformation.

Now assume $(M, \tilde{g})$ is an $(n+1)$-dimensional Riemannian manifold, $n\geq 2$. Suppose 
 $(\Sigma, g)$ is a smooth connected oriented closed hypersurface immersed in  $(M, \tilde{g})$ with induced metric $g$.  In this paper, unless otherwise specified, we denote by a $``\sim''$ all quantities
 on $(M,\tilde{g})$, for instance  by $ \widetilde{\nabla}$  the Levi-Civita connection of $(M,\tilde{g})$. Also we denote for example by $\nabla, Ric, \Delta$, the Levi-Civita connection, the Ricci curvature tensor, the Laplacian  on  $(\Sigma,g)$ respectively.

  Let  $\nu$ denote the outward unit normal to $\Sigma$. The   second fundamental form $A=(A_{ij})$ of $\Sigma$ is defined by 
   $$A: T_p\Sigma\otimes_s T_p\Sigma\rightarrow \mathbb{R},$$ $$A(X,Y)=-\tilde{g}(\widetilde{\nabla}_XY,\nu),$$ where  $X, Y\in T_p\Sigma, p\in \Sigma$.

 The second fundamental form $A$ corresponds a $(1,1)$-tensor on $T_p\Sigma$, which is called   the shape operator of $\Sigma$ and  still denoted by $A$. Hence the shape operator $A$ satisfies that   $A$:  $T_p\Sigma \rightarrow T_p\Sigma$, $AX=\widetilde{\nabla}_X\nu, X\in T_p\Sigma$.

Let  $\eta_i, i=1,\ldots, n$ denote the principle curvatures  of $\Sigma$ at $p$, which are the eigenvalues of $A$ at $p$ corresponding the orthonormal eigenvectors $\{e_i\}, i=1.\ldots, n$ respectively.  

By Definition \ref{elementary}, 
 for the second fundamental form or the shape operator $A$,  we have $s_r=\sigma_r(A)$, the Newton
transformations $P_r$ associated with $A$ at $p$ respectively, $0\leq r\leq n$ and    
 \begin{Def}
 The $r$th mean
curvature $H_r$ of $\Sigma$ at $p$ is defined by
$s_r=\binom{n}{r}  H_r,$ $ 0\leq r\leq n.$
\end{Def}
For instance, $H_1=\frac{s_1}{n}=\frac{H}{n}$, where $H=\text{tr}A$ is the mean curvature of $\Sigma$.
$H_n$ is called the
Gauss-Kronecker curvature. 
When the ambient space $M$ is a space form $M_c^{n+1}$ with constant sectional curvature $c$,
\begin{align*}&\text{Ric}=(n-1)cI+HA-A^2.\\
&R=\text{tr}\text{Ric}=n(n-1)c+H^2-|A|^2=n(n-1)c+2s_2.
\end{align*}
Hence $H_2$ is, modulo a constant, the
scalar curvature of $\Sigma$.

   In a local coordinate system, $g=(g_{ij})$, its inverse $g^{-1}=(g^{ij})$.  Given a smooth function $f$ on $\Sigma$,  $\text{Hess} f=\nabla^2f$ denotes the \text{\text{Hess}}ian of $f$ on $\Sigma$. Throughout  this paper, we use Einstein summation convention of summing over repeated indices. We  use the {\it raising and  
lowering indices} to change the type of tensor between a symmetric $(2,0)$ tensor and its corresponding $(1,1)$ tensor.
For instance,  $$g_i^j=g_{ik}g^{kj}=\delta_i^j,$$
$$(P_r)_{ij}=g_{ik}(P_r)^k_j.$$
\noindent We denote by $\langle\cdot , \cdot\rangle$ the inner product of two smooth tensor fields of the same type on $\Sigma$. Given  two smooth  symmetric $(2,0)$-tensor fields $(S_{ij})$ and $(T_{ij})$,  
$$\langle (S_{ij}),(T_{ij})\rangle=g^{ij}g^{kl}S_{ik}T_{jl}=S_i^lT_l^i=\langle (S_{i}^j),(T_{i}^j)\rangle.$$  
\noindent We have the notation $$|(T_{ij})|^2=\langle (T_{ij}),(T_{ij})\rangle=g^{ij}g^{kl}T_{ik}T_{jl}=T_i^jT_j^i=|(T_{i}^j)|^2.$$
 When there is no confusion, we omit writting the type of tensors, for instance $\langle S, T\rangle, |T|^2$.
We denote by $\text{tr}T$ and $\text{\r T}$ the trace of $(2,0)$ tensor $T$ and the traceless part  of $T$: $\text{\r T}=T-\frac{\text{tr}T}{n}g$ respectively. Then $$\text{tr}T=g^{ik}T_{ki}=T_i^i.$$
We use the notation: $\partial_t T_{ij}=\frac{\partial}{\partial t}(T_{ij}), \partial_t T_i^j=\frac{\partial}{\partial t}(T_{i}^j)$, which  define two tensors $(\partial_t T)_{ij}=\partial_t T_{ij}$ and $(\partial_t T)_{i}^j=\partial_t T_{ij}$, respectively.
\section{A normal deformation}\label{sec-var}

Let $F_0 : \Sigma^n\hookrightarrow (M^{n+1},\tilde{g})$ be a smooth immersion of a closed orientable hypersurface in a Riemannian manifold with the induced metric $g$.   We will prove the short-time existence of the the following initial value problem:  a one-parameter family  $F(\cdot,t): \Sigma\times [0,T)\to M^{n+1}$ of hypersurfaces $\Sigma_t=F(\cdot,t) $ satisfies:
\begin{equation}\label{evol-1}
\left\{
        \begin{array}{lll}
          \frac{\partial F}{\partial t}(x,t)&=f(x)\nu(x, t), & x\in\Sigma, t\in [0,T); \\
     F(x, 0)&=F_0(x ), &x\in \Sigma.
        \end{array}
      \right.
\end{equation}
where $f(x)$ is a smooth function on $\Sigma$ and $\nu(x,t)$ denotes the outer unit normal of $\Sigma_t$ at $F(x,t)$.

The approach is to represent the hypersurface $\Sigma_t$ as a graph in Fermi coordinates over the initial hypersurface $\Sigma$ and then to consider the deformation process as a  first order PDE equation for the height function. 

Let $\phi:\Sigma\times (-\epsilon,\epsilon)\to M^{n+1}$ be an  immersion, where $\epsilon$ is sufficiently small, by the exponential map
$$\phi(x,h)=\exp_{F_0(x)}h\nu(F_0(x)).$$
\noindent $\phi$ induces a metric $\phi^*\tilde{g}$ on $\Sigma\times (-\epsilon,\epsilon)$ from $(M^{n+1},\tilde{g})$. Let $x_1,\ldots, x_n$ denotes a local coordinate  on $\Sigma$. Then $\{\frac{\partial }{\partial x_1}, \ldots,\frac{\partial }{\partial x_n},\frac{\partial }{\partial h} \}$ is a local coordinate frame on $\Sigma\times (-\epsilon,\epsilon)$. By the Gauss lemma,  the metric $\phi^*\tilde{g}$ satisfies
$$(\phi^*\tilde{g})_{ih}=\tilde{g}\left(\frac{\partial }{\partial x_i},\frac{\partial }{\partial h}\right)=0, i=1,\ldots,n; (\phi^*\tilde{g})_{hh}=\tilde{g}\left(\frac{\partial }{\partial h},\frac{\partial }{\partial h}\right)=1.$$
\noindent  Clearly for $t$ fixed, $\phi_t(\Sigma)=\phi(\Sigma,t)$ is a hypersurface in $M^{n+1}$. If $\Sigma$ is embedded, $\phi$ gives the so-called Fermi-coordinates on a tubular neighborhood of $F_0(\Sigma)$. 
 
 Given a smooth function $u :\Sigma\to  (-\epsilon,\epsilon)$, the map $\psi:\Sigma \to \Sigma\times (-\epsilon,\epsilon)$ by $\psi(x)=(x,u(x))$ is an immersion, i.e.,  the graph $G(u)$ of $u$  is a hypersurface in $\Sigma\times (-\epsilon,\epsilon)$. A local coordinate frame on $G(u)$ is
 $$\frac{\partial\psi}{\partial x_{i}}= \frac{\partial}{\partial{x_i}}+\frac{\partial u}{\partial{x_i}}\frac{\partial}{\partial h},\quad  i=1,\ldots, n.$$
 The unit normal field ${\bf n}$ of $G(u)$ is
 $${\bf n}(x,u(x))=\frac{1}{W}\left( \frac{\partial}{\partial{h}}-(\phi^*\tilde{g})^{ij}\frac{\partial u}{\partial{x_i}} \frac{\partial}{\partial{x_j}}\right)(x,u(x)),$$
 where $W(x,u(x))=\sqrt{1+(\phi^*\tilde{g})^{ij}\frac{\partial u}{\partial {x_i}}\frac{\partial u}{\partial{x_j}}}(x,u(x))$.
 Hence $$\langle{\bf n}, \frac{\partial}{\partial{h}}\rangle =\frac{1}{W}, 
 \quad \frac{\partial}{\partial{h}}=\frac{1}{W}{\bf n}+\big(\frac{\partial}{\partial{h}}\big)^{\top},$$
  where $\big(\frac{\partial}{\partial h}\big)^\top$ is the projection of $\frac{\partial}{\partial{h}}$ to the tangent space of $G(u)$ spanned by $\frac{\partial\psi}{\partial x_{i}}, i=1,\ldots,n$.

 Under the above expression, we will prove that
 
\begin{thm} The initial value problem (\ref{evol-1})  has the unique smooth solution for $T$ sufficiently small.
\end{thm}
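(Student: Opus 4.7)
The plan is to represent the evolving hypersurface as a graph $\Sigma_t = G(u(\cdot,t))$ in the Fermi coordinates introduced above, via $F(x,t) = \phi(x, u(x,t))$ with height function $u:\Sigma\times[0,T)\to(-\epsilon,\epsilon)$ and $u(\cdot,0)\equiv 0$. In these coordinates the vector-valued initial value problem~(\ref{evol-1}) reduces to a scalar first-order evolution equation for $u$, to which the classical theory of nonlinear first-order PDEs applies.

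To derive this scalar equation I would differentiate $F(x,t)=\phi(x,u(x,t))$ in $t$ to obtain $\partial_t F = (\partial_t u)\,\tfrac{\partial}{\partial h}$, and then use the orthogonal decomposition $\tfrac{\partial}{\partial h} = \tfrac{1}{W}{\bf n} + \bigl(\tfrac{\partial}{\partial h}\bigr)^\top$ along $G(u(\cdot,t))$ together with the projection of the relation $\partial_t F = f{\bf n}$ onto ${\bf n}$. This yields the scalar evolution
\[
\partial_t u \;=\; f(x)\,W(x,u,Du) \;=\; f(x)\sqrt{1 + (\phi^{*}\tilde{g})^{ij}(x,u)\,\partial_i u\,\partial_j u}, \qquad u(\cdot,0)\equiv 0,
\]
a first-order nonlinear scalar PDE on the compact manifold $\Sigma$ whose right-hand side is smooth in $(x,u,p)$.

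Short-time existence of a smooth solution to this scalar PDE then follows from the classical method of characteristics. Indeed, the Hamiltonian $H(x,u,p)=f(x)\sqrt{1+(\phi^{*}\tilde{g})^{ij}(x,u)\,p_i p_j}$ is smooth, the initial data $u\equiv 0$ is smooth, and $\Sigma$ is compact, so the characteristic ODE system admits a unique smooth flow on a sufficiently small time interval $[0,T)$; this produces a unique smooth solution $u(x,t)$, with uniqueness inherited from the characteristic flow. (An equivalent route is a contraction argument in a H\"older or Sobolev space on $\Sigma\times[0,T]$ for $T$ small.) Substituting $u$ into $F(x,t)=\phi(x,u(x,t))$ yields a smooth family of immersions whose image $\Sigma_t$ has the prescribed normal velocity $f$; should one require $\partial_t F = f{\bf n}$ as a vector equation rather than only in its normal component, one post-composes with a time-dependent self-diffeomorphism of $\Sigma$ obtained by integrating a smooth tangential vector field that cancels $(\partial_t u)\bigl(\tfrac{\partial}{\partial h}\bigr)^\top$, which introduces no further existence issue.

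I expect the main obstacle to be conceptual rather than computational: one must carry out the orthogonal splitting in Fermi coordinates carefully to arrive at the correct scalar equation and to verify that the nonlinear dependence on $Du$ (through $W$) is nonetheless smooth, so that the characteristic system is a \emph{smooth} ODE system. Once this reduction is in place, the short-time existence and uniqueness of a smooth solution is standard, and the regularity of $F$ is inherited from that of $u$ and $\phi$.
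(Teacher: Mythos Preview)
Your proposal is correct and follows essentially the same approach as the paper: reduce to the scalar first-order PDE $\partial_t u = f\,W(x,u,Du)$ for the height function in Fermi coordinates, solve it for small time via the classical theory of first-order PDE (method of characteristics), and then reparametrize by integrating a tangential vector field to kill the component $(\partial_t u)\bigl(\tfrac{\partial}{\partial h}\bigr)^\top$. The paper carries out the reparametrization step a bit more explicitly (solving $\partial_t\alpha = -\Psi_t^*\lambda$, $\alpha_0=\mathrm{Id}$, and setting $\Phi=\Psi\circ\alpha$, $F=\phi\circ\Phi$), but the strategy is identical.
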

\begin{proof} Consider the initial value problem of the first order PDE on $\Sigma\times [0, T), T<\epsilon$:
\begin{equation}\label{evol-u}
\left\{
        \begin{array}{ll}
          \frac{\partial u}{\partial t}(x,t)&=f(x)\sqrt{1+(\phi^*\tilde{g})^{ij}\frac{\partial u}{\partial{x_i}}\frac{\partial u}{\partial{x_j}}(x,u(x,t))}, (x,t)\in \Sigma\times (0,T)\\
 u(x,0)&=0, \quad  x\in \Sigma.
        \end{array}
      \right.
\end{equation} 
By the theory of the first order PDE (cf \cite{Ca} \S 35--48; or \cite{E}, Chapter 3, Section 3), \eqref{evol-u} exists the unique smooth solution for $t<T$, where $T$ is sufficiently small. Here the closeness of  $\Sigma$ guarantees the global existence of the solution. Also we may choose $T$ sufficiently small so that $u(x,t)\in (-\epsilon,\epsilon)$.

By the solution $u(x,t)$ of \eqref{evol-u}, 
 we may construct a one-parameter family of hypersurfaces, i.e., graphs $G(u_t)$, where $u_t(x)=u(x,t)$, $t\in [0,T)$, parametrised by $\Psi: \Sigma\times [0,T)\to \Sigma\times (-\epsilon,\epsilon)$ satisfying $$\Psi(x,t)=\Psi_t(x) = (x, u(x, t)).$$  Then 
 $$\frac{\partial \Psi}{\partial t}=\frac{\partial u}{\partial t}\frac{\partial }{\partial h}=\frac{\partial u}{\partial t}\big(\frac{1}{W}{\bf n}\big)+\frac{\partial u}{\partial t}\big(\frac{\partial}{\partial{h}}\big)^{\top}.$$
 \noindent  So we have 
 \begin{equation}\frac{\partial \Psi}{\partial t}(x,t)=f(x){\bf n}(x,t)+\lambda(x,t),
 \end{equation}
 \noindent where ${\bf n}(x,t)$ denotes the unit normal of $\Psi_t(\Sigma)$ at $\Psi(x,t)$ and $\lambda=\frac{\partial u}{\partial t}\big(\frac{\partial}{\partial h}\big)^{\top}$ is the projection  of $\frac{\partial \Psi}{\partial t}$ to the tangent space of $\Psi_t(\Sigma)$ spanned by $\frac{\partial\Psi}{\partial x_{i}}, i=1,\ldots, n$.
 
 Let $\alpha: \Sigma \times [0,T)\to \Sigma$ be a smooth one-parameter  family of diffeomorphisms of $\Sigma$ satisfying
\begin{equation}\label{dif}\frac{\partial\alpha }{\partial t}=-\Psi_t^*\lambda, \quad \alpha_0=Id \quad (\text{Identity})
\end{equation}
Integrating \eqref{dif} directly, we obtain  the unique smooth  solution.
\noindent Define $\Phi(x,t)=\Psi(\alpha(x,t),t).$ Then
\begin{eqnarray*}
\frac{\partial \Phi}{\partial t}(x,t)&=&\Psi_*(\frac{\partial\alpha }{\partial t})(\alpha(x,t),t)+\big(\frac{\partial}{\partial t}\Psi(\cdot,t)\big)(\alpha(x,t),t)\\
&=&-\lambda\Psi(\alpha(x,t),t)+f(x){\nu}(\alpha(x,t),t)+\lambda \Phi(x,t)\\
&=&f(x){\bf n}(x,t).
\end{eqnarray*}
Finally note the exponential map $\phi$ is a local isometry. Define $F: \Sigma\times [0,T)\to M^{n+1}$ by $F=\phi\circ\Phi$.
 Then $F(x,t)$ is just the solution of Problem (\ref{evol-1}). 
 
 \end{proof}

\section{Evolution equations of geometric quantities  and  proof of Theorem \ref{opt-1}}\label{sec-mean}

In this section, we will prove Theorem \ref{opt-1}. Although there is a unified proof of it and Theorem \ref{opt-2}, we prefer to give an independent proof of Theorem \ref{opt-1}. One reason is that the evolution equations for mean curvature and the shape operator are much more simple than the ones for  $r$-th mean curvatures and the Newton transformations $P_r$, $2\leq r\leq n$.  We use the  method by Perez \cite{P} in proving Theorem \ref{thm-3}.

Let $F_0 : \Sigma^n\hookrightarrow (M^{n+1},\tilde{g})$ be a smooth immersion of a closed orientable hypersurface in a Riemannian manifold with the induced metric $g$.
Consider the normal deformation of hypersurfaces according to the  equation:
\begin{equation}\label{evol-g}
\left\{
        \begin{array}{lll}
          \frac{\partial F}{\partial t}(x,t)&=f(x,t)\nu(x, t), & x\in\Sigma, t\in [0,T); \\
     F(x, 0)&=F_0(x ), &x\in \Sigma.
        \end{array}
      \right.
\end{equation}
where $f(x,t)$ is a smooth function on $\Sigma$ and $\nu(x,t)$ denotes the outer unit normal of $\Sigma_t=F(\Sigma,t)$ at $F(x,t)$.

 If the solution of \eqref{evol-g} exists,   the following basic evolution equations holds under \eqref{evol-g} (cf \cite{H},  \cite{HP}):
\begin{prop}\label{basic} For any solution of \eqref{evol-g}, it  holds that
\begin{eqnarray}
 \partial_tg_{ij}&=&2f A_{ij},\\
  \partial_tg^{ij}&=&-2f A^{ij},\\
 \partial_tg_i^j&=&0\\
  \partial_t(dvol)&=&fHdvol,\\
  \partial_t{\bf \nu}&=&-\nabla f,\\
 \partial_tA_{ij}&=&-(\text{\text{\text{Hess}}}f)_{ij}+ f (A^2)_{ij}-f\widetilde{R}_{0i0j},\\
  \partial_tH&=&-\Delta f-|A|^2f-f\widetilde{\text{Ric}}(\nu,\nu).
  \label{h-evol}
  \end{eqnarray}
  \noindent In particular, when the ambient space is the space form $M_c^{n+1}$ with the sectional curvature $c$, 
\begin{eqnarray}
  \partial_tA_{ij}&=&-(\text{\text{\text{Hess}}}f)_{ij}+ f  (A^2)_{ij}-cfg_{ij},\\
   \partial_tH&=&-\Delta f-|A|^2f-ncf.
\end{eqnarray}
\end{prop}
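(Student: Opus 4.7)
The plan is to verify each formula directly from the deformation equation $\partial_t F = f\nu$, commuting $\partial_t$ with the spatial coordinate vector fields $\partial_i = \partial/\partial x_i$ and using the Weingarten and Gauss formulas. The key structural relations I will use throughout are $\widetilde{\nabla}_{\partial_i}\nu = A_i^k\partial_k F$, $A_{ij} = -\tilde g(\widetilde{\nabla}_{\partial_i}\partial_j F,\nu)$, and the fact that on $\Sigma\times[0,T)$ the fields $\partial_t$ and $\partial_i$ commute, so $\partial_t(\partial_i F) = \partial_i(f\nu) = (\partial_i f)\nu + f A_i^k \partial_k F$.

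First I would derive the metric identities. Differentiating $g_{ij} = \tilde g(\partial_i F,\partial_j F)$ in $t$ and applying the above formula together with $\tilde g(\nu,\partial_k F)=0$ produces $\partial_t g_{ij} = 2fA_{ij}$. The evolution of $g^{ij}$ then follows from $g^{ik}g_{kj} = \delta^i_j$, and $\partial_t g_i^j = 0$ is immediate because $g_i^j = \delta_i^j$. The volume form identity follows from $\partial_t \log\sqrt{\det g} = \tfrac{1}{2}g^{ij}\partial_t g_{ij} = fH$. For the normal vector, I would differentiate the pointwise relations $\tilde g(\nu,\nu) = 1$ and $\tilde g(\nu,\partial_i F) = 0$: the first gives that $\partial_t\nu$ is tangential, and the second, combined with $\partial_t(\partial_i F) = \partial_i(f\nu)$, gives $\tilde g(\partial_t\nu,\partial_i F) = -\partial_i f$, so $\partial_t\nu = -\nabla f$.

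The main computation is the evolution of $A_{ij}$. Starting from $A_{ij} = -\tilde g(\widetilde{\nabla}_{\partial_i}\partial_j F,\nu)$, I would differentiate in $t$ and swap $\partial_t$ past $\widetilde{\nabla}_{\partial_i}$, which, since $\partial_t F = f\nu$, introduces an ambient curvature term $f\tilde R(\nu,\partial_i)\partial_j F$. Expanding $\widetilde{\nabla}_{\partial_i}\widetilde{\nabla}_{\partial_j}(f\nu)$ and using Weingarten twice, then projecting onto $\nu$ and simplifying the tangential derivatives of $f$ using Christoffel symbols, produces the three terms $-(\text{Hess}\,f)_{ij}$, $f(A^2)_{ij}$, and $-f\widetilde{R}_{0i0j}$ (with the index $0$ denoting $\nu$). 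Finally, for $H = g^{ij}A_{ij}$ I combine $\partial_t g^{ij}$ and $\partial_t A_{ij}$: the cross term $-2fA^{ij}A_{ij} = -2f|A|^2$ partially cancels the $g^{ij}f(A^2)_{ij} = f|A|^2$ term, the Hessian contracts to $-\Delta f$, and the curvature trace gives $-f\widetilde{\operatorname{Ric}}(\nu,\nu)$, yielding \eqref{h-evol}.

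The only delicate point is keeping signs consistent and correctly commuting $\partial_t$ with the ambient connection $\widetilde{\nabla}$, since the interchange of derivatives produces precisely one curvature contribution; everything else is a routine application of the Gauss--Weingarten apparatus. The space-form specializations are immediate from $\widetilde{R}_{ijkl} = c(\tilde g_{ik}\tilde g_{jl} - \tilde g_{il}\tilde g_{jk})$, which gives $\widetilde{R}_{0i0j} = cg_{ij}$ and hence $\widetilde{\operatorname{Ric}}(\nu,\nu) = nc$, reducing the last two equations to the stated form.
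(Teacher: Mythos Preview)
Your outline is correct and follows the standard derivation of these evolution equations; all the sign conventions match the paper's (in particular $A(X,Y)=-\tilde g(\widetilde\nabla_XY,\nu)$ and $AX=\widetilde\nabla_X\nu$), and the commutator argument for $\partial_tA_{ij}$ producing the single curvature term $-f\widetilde R_{0i0j}$ is exactly right. The paper itself does not supply a proof of this proposition but simply cites \cite{H} and \cite{HP}; the computation you sketch is essentially the one found in those references, so there is no substantive difference to discuss.
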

 \noindent Here and thereafter, for simplicity, we drop the $t$ subscript wherever it would not lead to confusion. For instance, $g$and $A$ denote the induced metric  $g_t=F_t^*(\tilde{g})$ and the second fundamental form of $(\Sigma_t, g_t)$ respectively. 
   When  the ambient space is the space form $M_c^{n+1}$, 
 Proposition \ref{basic} yields 
\begin{prop} 
\begin{eqnarray}
  \partial_tA_i^j&=&-(\text{\text{\text{Hess}}} f+fA^2+cfg)_i^j.\label{a-ij}\\
 \partial_t|A|^2&=&-2\langle\text{\text{\text{Hess}}}f, A\rangle-2f\text{tr}A^3-2cfH.\\
\label{4-A}
\partial_t|\text{\r A}|^2&=&-2\langle\text{\text{\text{Hess}}}f,\text{\r A}\rangle-2f\langle A^2,\text{\r A}\rangle,
 \end{eqnarray}
 where $\text{\r A}=A-\frac{trA}{n}g.$
\end{prop}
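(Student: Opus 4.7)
The plan is to derive each of the three evolution equations directly from Proposition \ref{basic} by careful differentiation and index manipulation, treating this essentially as a bookkeeping computation in local coordinates. No new geometric input is needed beyond what has already been established.

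For the first equation, I would write $A_i^j = g^{jk}A_{ik}$ and apply the product rule, using both $\partial_t g^{jk}=-2fA^{jk}$ and the expression for $\partial_t A_{ik}$ from Proposition \ref{basic}. The contribution $(\partial_t g^{jk})A_{ik}$ equals $-2fA^{jk}A_{ik}=-2f(A^2)_i^j$, while $g^{jk}\partial_t A_{ik}$ contributes $-(\text{Hess} f)_i^j + f(A^2)_i^j - cf\,\delta_i^j$. Adding these, the coefficient of $f(A^2)_i^j$ flips from $+1$ to $-1$, producing the stated formula $\partial_t A_i^j = -(\text{Hess} f + fA^2 + cfg)_i^j$. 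This sign flip is the only subtle point and is consistent with $\partial_t g_i^j = 0$.

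For the second equation, I would differentiate $|A|^2 = A_i^j A_j^i$, which by the symmetry of $A$ gives $\partial_t |A|^2 = 2 A_j^i \partial_t A_i^j$. Substituting the first equation and using the traces $A_j^i (A^2)_i^j = \operatorname{tr}(A^3)$, $A_j^i (\text{Hess} f)^j_i = \langle \text{Hess} f, A\rangle$ and $A_j^i \delta_i^j = H$ yields the result immediately.

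For the third equation, the cleanest approach is to start from the identity $|\text{\r A}|^2 = |A|^2 - H^2/n$ and use the evolution of $|A|^2$ just obtained together with $\partial_t H = -\Delta f - |A|^2 f - ncf$ from \eqref{h-evol}. A short calculation shows that the two contributions of the form $-2cfH$ and $+\frac{2H}{n}(ncf)$ cancel, so the final expression is free of the sectional curvature $c$. To match the stated right-hand side, one then uses $\langle \text{Hess} f, g\rangle = \Delta f$ and $\langle A^2, g\rangle = |A|^2$ to rewrite the extra $\frac{2H}{n}\Delta f$ and $\frac{2fH|A|^2}{n}$ terms as pieces of $-2\langle \text{Hess} f, \text{\r A}\rangle$ and $-2f\langle A^2, \text{\r A}\rangle$, respectively.

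The main obstacle is simply careful index bookkeeping, particularly in the first equation where both $g^{jk}$ and $A_{ik}$ are time-dependent; the conceptually interesting observation is the cancellation of the $c$-dependent terms in the third equation, which is what allows the evolution of $|\text{\r A}|^2$ to be expressed purely in terms of $\text{\r A}$ without explicit dependence on the ambient curvature, mirroring the Euclidean computation used by Perez in \cite{P}.
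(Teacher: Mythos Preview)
Your proposal is correct and follows essentially the same approach as the paper's proof: the paper also computes $\partial_t A_i^j$ via the product rule on $A_{ik}g^{kj}$, differentiates $|A|^2=A_i^jA_j^i$ directly, and obtains the third identity from $|\text{\r A}|^2=|A|^2-H^2/n$ together with the evolution of $H$, observing the same cancellation of the $c$-dependent terms and the same rewriting via $\langle\text{Hess} f,g\rangle=\Delta f$ and $\langle A^2,g\rangle=|A|^2$.
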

\begin{proof}
\begin{eqnarray*}
 \partial_tA_i^j&=&\partial_t(A_{ik}g^{kj})\\
 &=&(\partial_tA_{ik})g^{kj}+A_{ik}\partial_tg^{kj}\\
 &=& [-(\text{\text{\text{Hess}}}f)_{ik}+ f  (A^2)_{ik}-cfg_{ik}]g^{kj}-2fA_{ik}A^{kj}\\
 &=& -(\text{\text{\text{Hess}}} f)_i^j -f(A^2)_i^j-cfg_i^j
\end{eqnarray*}
\begin{eqnarray*}
 \partial_t|A|^2&=&\partial_t(A_{i}^jA_j^i)\\
 &=&(\partial_tA_{i}^j)A_j^i+A_{i}^j(\partial_tA_j^{i})\\
 &=& -(\text{\text{\text{Hess}}} f+fA^2+cfg)_i^jA_j^i-A_i^j(\text{\text{\text{Hess}}} f+fA^2+cfg)_j^i\\
 &=& -2\langle\text{\text{\text{Hess}}}f, A\rangle-2f\text{tr}A^3-2cfH.
\end{eqnarray*}
\begin{eqnarray*}
 \partial_t|\text{\r A}|^2&=&\partial_t(|A|^2-\frac{H^2}{n})\\
 &=&\partial_t(|A|^2)-\frac{2H}{n}\partial_tH\\
 &=&  -2\langle\text{\text{\text{Hess}}}f, A\rangle-2f\text{tr}A^3-2cfH+\frac{2H}{n}(\Delta f+|A|^2f+ncf)\\
 &=& -2\langle\text{\text{\text{Hess}}}f, A\rangle+\frac{2H}{n}\langle\text{\text{\text{Hess}}}f, g\rangle-2f\langle A^2,A\rangle+\frac{2H}{n}\langle A^2,g\rangle\\
 &=&-2\langle\text{\text{\text{Hess}}}f,\text{\r A}\rangle-2f\langle A^2,\text{\r A}\rangle.
\end{eqnarray*}

\end{proof}
In the rest of this section, we  will consider the case that the ambient space is the space form $M_c^{n+1}$ and $f(x,t)=f(x) $, that is, the normal deformation \eqref{evol-1}. Assume that $\Sigma$ is a closed totally umbilical hypersurface in $M^{n+1}_c$. It is well known that $\Sigma$
must be a geodesic sphere, i.e, distance sphere $\mathbb{S}^n(a)$, where $a$ denotes its geodesic radius. Assume $F(x,t):\Sigma\times [0,T)\to M_c^{n+1}$ be the solution of the normal deformation \eqref{evol-1}. 
We consider the functional $\mathcal{F}: \Sigma_t=F(\Sigma,t)\to \mathbb{R}$ given by
\begin{equation}\label{f-1}
\mathcal{F}(\Sigma_t)=C^2\int_{\Sigma}|\text{\r{A}}|^2-\int_{\Sigma}\left(H-\overline{H}\right)^2,
\end{equation}
where $C$ is a constant and the subscripts $t$ of  $\text{\r A},H$ and $\overline{H}$ are omitted. 

Obviously, $\mathcal{F}(\Sigma)=0$. Next we obtain the first variation of $\mathcal{F}$ at $t=0$ as follows.
\begin{prop} \label{first-vari} If $\Sigma$ is totally umbilical, then 
$ \frac{d}{dt}\mathcal{F}(\Sigma_t)|_{t=0}=0$.
\end{prop}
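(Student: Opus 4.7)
The plan is to differentiate the two integrals in $\mathcal{F}(\Sigma_t)$ separately, using the evolution identities just derived, and then exploit the fact that at $t=0$ both $\mathring{A}$ and $H-\overline{H}$ vanish pointwise on the geodesic sphere $\Sigma$, so every term that appears is killed by one of these zero factors.

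First, I would expand
$$\frac{d}{dt}\int_{\Sigma_t}|\mathring{A}|^2\,dv \;=\; \int_{\Sigma_t}\partial_t|\mathring{A}|^2\,dv \;+\; \int_{\Sigma_t}|\mathring{A}|^2\,fH\,dv,$$
using $\partial_t(dv)=fH\,dv$ from Proposition \ref{basic}. The evolution formula
$$\partial_t|\mathring{A}|^2=-2\langle\mathrm{Hess}\,f,\mathring{A}\rangle-2f\langle A^2,\mathring{A}\rangle$$
is linear in $\mathring{A}$, and the second integrand is quadratic in it. Since $\Sigma$ is assumed totally umbilical, $\mathring{A}\equiv 0$ on $\Sigma$ at $t=0$, so both integrands vanish identically on $\Sigma$.

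Next, in the same spirit,
$$\frac{d}{dt}\int_{\Sigma_t}(H-\overline{H})^2\,dv \;=\; \int_{\Sigma_t}2(H-\overline{H})\bigl(\partial_t H-\partial_t\overline{H}\bigr)\,dv \;+\; \int_{\Sigma_t}(H-\overline{H})^2\,fH\,dv.$$
A closed totally umbilical hypersurface in $M_c^{n+1}$ is a geodesic sphere and hence has constant mean curvature, so $H\equiv\overline{H}$ on $\Sigma$ at $t=0$, and each integrand vanishes. Crucially, one does not need to compute $\partial_t\overline{H}$ explicitly, because it is multiplied by the vanishing factor $H-\overline{H}$. Combining the two contributions (and interchanging derivative and integral, which is legitimate by smoothness of $F$ and compactness of $\Sigma$) yields $\frac{d}{dt}\mathcal{F}(\Sigma_t)|_{t=0}=0$.

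I anticipate no genuine obstacle in this proposition: the functional has been set up so that each summand vanishes to second order at the geodesic sphere, which automatically forces the first variation to be zero. The real work in this section must come in the next step, where one has to evaluate the second variation and choose the deformation direction $f$ so that $\mathcal{F}(\Sigma_t)$ becomes strictly negative for small $t\neq 0$, while simultaneously keeping $\mathrm{Ric}_{\Sigma_t}>0$.
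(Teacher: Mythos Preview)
Your proof is correct and follows essentially the same approach as the paper: both expand $\frac{d}{dt}\mathcal{F}(\Sigma_t)$ via the product rule and observe that every integrand carries at least one factor of $\mathring{A}$ or $H-\overline{H}$, both of which vanish on the totally umbilical $\Sigma$ at $t=0$. The only cosmetic difference is that the paper justifies $H\equiv\overline{H}$ by invoking inequality \eqref{ine-i-03}, whereas you argue directly that a closed totally umbilical hypersurface in $M_c^{n+1}$ is a geodesic sphere with constant $H$; your route is slightly more self-contained.
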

\begin{proof}
\begin{eqnarray*}
\frac{d}{dt}\mathcal{F}(\Sigma_t)
&=&C^2\int_{\Sigma}\partial_t|\text{\r{A}}|^2dvol+C^2\int_{\Sigma}|\text{\r{A}}|^2\partial_t(dvol)\\
& &-\int_{\Sigma}\partial_t(H-\overline{H})^2dvol
-\int_{\Sigma}\left(H-\overline{H}\right)^2\partial_t(dvol).
\end{eqnarray*}
At   $t=0$, 
\begin{equation}\label{a1-trace}
\text{\r A}=0. 
\end{equation}
By \eqref{ine-i-03},
\begin{equation}\label{h-con}
 H=\overline{H}.
\end{equation}
\begin{equation}\label{h-1}
\partial_t(H-\overline{H})^2|_{t=0}=2(H-\overline{H})\partial_t(H-\overline{H})|_{t=0}=0.
\end{equation}
By \eqref{4-A}, or directly 
\begin{eqnarray}\label{a2-trace}
\partial_t|\text {\r A}|^2|_{t=0}&=&\partial_t (\text {\r A}_{i}^j\text {\r A}_{j}^i)|_{t=0}\nonumber\\
&=&(\partial_t \text {\r A}_{i}^j)\text {\r A}_{j}^i|_{t=0}+ \text {\r A}_{i}^j(\partial_t\text {\r A}_{j}^i)|_{t=0}\nonumber\\
&=&0.
\end{eqnarray}
Hence $\frac{d}{dt}\mathcal{F}(\Sigma_t)|_{t=0}=0$.
\end{proof}

Furthermore we discuss  the second variation of $\mathcal{F}$. The straightforward computation implies  the following conclusion.
\begin{prop} \label{varphi-1} Suppose $\varphi(\cdot,t): \Sigma\times (-\epsilon,\epsilon)\to \mathbb{R}$ is a  smooth function. If $\varphi |_{t=0}=0$ and $ \partial_t\varphi |_{t=0}=0$, then 
\begin{equation} 
\bigg(\frac{d^2}{dt^2}\int_{\Sigma}\varphi\bigg)(0)=\int_{\Sigma}\partial_t^2\varphi |_{t=0}.
\end{equation}
\end{prop}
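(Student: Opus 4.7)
The plan is to differentiate $\int_\Sigma \varphi(\cdot,t)\, dvol_t$ twice in $t$ using the product rule, and then read off which terms survive at $t=0$ under the stated hypotheses. The only subtlety is that the volume element itself depends on $t$; from Proposition \ref{basic} we have $\partial_t(dvol)=fH\, dvol$, but the explicit form will be irrelevant — only its smoothness in $t$ is needed.

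First I would apply the Leibniz rule once to obtain
\begin{equation*}
\frac{d}{dt}\int_\Sigma \varphi\, dvol = \int_\Sigma (\partial_t\varphi)\, dvol + \int_\Sigma \varphi\, \partial_t(dvol),
\end{equation*}
which is itself a smooth function of $t$ on $(-\epsilon,\epsilon)$. Differentiating once more and expanding by the product rule produces three summands:
\begin{equation*}
\frac{d^2}{dt^2}\int_\Sigma \varphi\, dvol = \int_\Sigma (\partial_t^2\varphi)\, dvol + 2\int_\Sigma (\partial_t\varphi)\, \partial_t(dvol) + \int_\Sigma \varphi\, \partial_t^2(dvol).
\end{equation*}

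Evaluating this identity at $t=0$, the hypotheses $\varphi|_{t=0}=0$ and $\partial_t\varphi|_{t=0}=0$ force the second and third integrands to vanish pointwise on $\Sigma$, so only the first term survives, giving exactly $\int_\Sigma \partial_t^2\varphi|_{t=0}$ as claimed. There is no real obstacle here: the proposition is a bookkeeping lemma whose purpose is to justify, in the forthcoming computation of $\frac{d^2}{dt^2}\mathcal{F}(\Sigma_t)|_{t=0}$, the neglect of all contributions coming from the evolving volume form, since these will be annihilated by the vanishing of $|\text{\r A}|^2$ and $(H-\overline H)^2$ together with their first $t$-derivatives at the umbilical initial hypersurface.
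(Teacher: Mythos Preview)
Your proof is correct and is precisely the ``straightforward computation'' that the paper invokes without writing out; you have supplied the two applications of the Leibniz rule and correctly identified which terms vanish at $t=0$ under the hypotheses $\varphi|_{t=0}=0$ and $\partial_t\varphi|_{t=0}=0$.
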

\noindent Take $\phi=|\text{\r A}|^2$ and $ (H-\overline{H})^2$ in Proposition \ref{varphi-1} respectively. By \eqref{a1-trace}, \eqref{h-con}, \eqref{h-1} and  \eqref{a2-trace}, it holds that 
   \begin{equation}\label{second-vari-1}
\frac{d^2}{d t^2}F(\Sigma_t)|_{t=0}=C^2\int_{\Sigma}\partial_t^2|\text{\r A}|^2|_{t=0}-\int_{\Sigma}\partial_t^2(H-\overline{H})^2|_{t=0}
\end{equation}
Now we will calculate the right hand of (\ref{second-vari-1}).
\begin{eqnarray}\label{4-A-1}
\nonumber
&&\partial_t\text{\r A}_i^j\\
\nonumber
&=&\partial_t(A_i^j-\frac Hng_i^j)\\
\nonumber
&=&\partial_tA_i^j-\frac 1n(\partial_tH)g_i^j-\frac{H}{n}\partial_tg_i^j\\
\nonumber
&=&-[\text{\text{\text{Hess}}} f+fA^2+fcg]_i^j+\frac 1n (\Delta f+|A|^2f+ncf)g_i^j\\
\nonumber
 &=&-(\text{\text{\text{Hess}}} f)_{i}^j+\frac 1n(\Delta f)g_i^j-f(A^2)_i^j+\frac 1n|A|^2fg_i^j\\
 &=&-(\text{\text{\text{Hess}}}f)_{i}^j+\frac 1n(\Delta f)g_i^j-f(A\text{\r A})_i^j+\frac 1n|\text{\r A}|^2fg_i^j-\frac1n Hf\text{\r A}_i^j.
\end{eqnarray}
In the last equality of \eqref{4-A-1}, we used the identity
$$A^2-\frac{1}{n}|A|^2g=A\text{\r A}+\frac{H}{n}A-\frac{1}{n}(|\text{\r A}|^2+\frac{H^2}{n})g=fA\text{\r A}-\frac{1}{n}|\text{\r A}|^2+\frac{H}{n}\text{\r A}.$$
Note $\text{\r A}|_{t=0}=0$. By \eqref{4-A-1},
\begin{eqnarray*}
\partial_t\text{\r A}_i^j|_{t=0}&=&-(\text{\text{\text{Hess}}}f)_{i}^j|_{t=0}+\frac 1n(\Delta f)g_i^j|_{t=0}\\
&=&-(\text{\r Hess}f)_i^j|_{t=0},
\end{eqnarray*}
and 
\begin{eqnarray}\label{4-A-2}
\nonumber
\partial_t^2|\text{\r A}|^2|_{t=0}&=&\partial_t[\partial_t(\text{\r A}_i^j\text{\r A}_j^i)]|_{t=0}\\
\nonumber
&=&[(\partial_t^2\text{\r A}_i^j)\text{\r A}_j^i+2(\partial_t\text{\r A}_i^j)(\partial_t(\text{\r A}_j^i)+\text{\r A}_i^j(\partial_t^2\text{\r A}_j^i)]|_{t=0}\\
&=&2(\partial_t\text{\r A}_i^j|_{t=0})(\partial_t\text{\r A}_j^i|_{t=0})\\
\nonumber
&=&2(\text{\r Hess}f)_i^j|_{t=0}(\text{\r Hess}f)_j^i|_{t=0}\\
\nonumber
&=&2|\text{\r Hess}f|^2|_{t=0}
\end{eqnarray}
In order to calculate the second term of the right side of \eqref{second-vari-1}, we need the following proposition, which was proved in \cite{P}.
\begin{prop} \cite{P}\label{average} Suppose $\varphi(\cdot,t): \Sigma\times (-\epsilon,\epsilon)\to \mathbb{R}$ be a smooth function. Let $\overline{\varphi}(t)=\frac{1}{\text{vol}(\Sigma_t)}\int_{\Sigma}\varphi(t)dvol(\Sigma_t)$ be average of $\varphi(t)$ on $\Sigma_t$. Then 
\begin{equation}
\frac{d}{dt}\overline{\varphi}=\overline{\partial_t\varphi}+\overline{fH(\varphi-\overline{\varphi})}.
\end{equation}

\end{prop}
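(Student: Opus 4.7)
The plan is to compute $\frac{d}{dt}\overline{\varphi}$ directly by treating $\overline{\varphi}(t)$ as a quotient of two $t$-dependent integrals, namely $\int_{\Sigma}\varphi\,dvol(\Sigma_t)$ over $\text{vol}(\Sigma_t)$, and then applying the quotient rule. The only geometric input needed is the evolution of the volume element under the normal deformation \eqref{evol-g}, which is supplied by Proposition \ref{basic}: $\partial_t(dvol)=fH\,dvol$.

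First I would differentiate the numerator, using that $dvol$ depends on $t$, to obtain
\begin{equation*}
\frac{d}{dt}\int_{\Sigma}\varphi\,dvol=\int_{\Sigma}\bigl(\partial_t\varphi+\varphi fH\bigr)\,dvol,
\end{equation*}
and differentiate the denominator to obtain $\frac{d}{dt}\text{vol}(\Sigma_t)=\int_{\Sigma}fH\,dvol$. Substituting into the quotient rule gives
\begin{equation*}
\frac{d}{dt}\overline{\varphi}=\frac{1}{\text{vol}(\Sigma_t)}\int_{\Sigma}\bigl(\partial_t\varphi+\varphi fH\bigr)\,dvol-\frac{\int_{\Sigma}\varphi\,dvol}{\text{vol}(\Sigma_t)^2}\int_{\Sigma}fH\,dvol,
\end{equation*}
which in the averaging notation is exactly $\overline{\partial_t\varphi}+\overline{\varphi fH}-\overline{\varphi}\cdot\overline{fH}$.

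Finally I would rewrite the last two terms as a single average. Since $\overline{\varphi}(t)$ is constant on $\Sigma_t$ (it only depends on $t$), one has $\overline{\varphi}\cdot\overline{fH}=\overline{fH\,\overline{\varphi}}$, and therefore
\begin{equation*}
\overline{\varphi fH}-\overline{\varphi}\cdot\overline{fH}=\overline{fH(\varphi-\overline{\varphi})},
\end{equation*}
which yields the claimed formula. There is no real obstacle here: the entire argument is a short application of the quotient rule combined with the evolution equation $\partial_t(dvol)=fH\,dvol$ and the trivial observation that the spatial average commutes with multiplication by spatially constant quantities.
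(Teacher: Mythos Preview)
Your proposal is correct and follows essentially the same approach as the paper: both differentiate $\overline{\varphi}$ via the product/quotient rule, insert the evolution $\partial_t(dvol)=fH\,dvol$ from Proposition~\ref{basic}, and then use that $\overline{\varphi}$ is spatially constant to combine $\overline{\varphi fH}-\overline{\varphi}\cdot\overline{fH}$ into $\overline{fH(\varphi-\overline{\varphi})}$.
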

\noindent For the completeness of proof, we include its proof here.

\begin{proof}
\begin{eqnarray*}
&&\partial_t\overline{\varphi}\\
&=&\big[\partial_t(\frac{1}{\text{vol}(\Sigma_t)})\big]\int_{\Sigma}\varphi dvol+\frac{1}{\text{vol}(\Sigma_t)}\partial_t\big(\int_{\Sigma}\varphi dvol\big)\\
&=&-\frac{1}{\text{vol}(\Sigma_t)^2}\big[\partial_t(\int_{\Sigma}dvol)\big]\int_{\Sigma}\varphi dvol
\\
&& \quad +\frac{1}{\text{vol}(\Sigma_t)}\int_{\Sigma}\partial_t\varphi dvol+\frac{1}{\text{vol}(\Sigma_t)}\int_{\Sigma}\varphi \partial_t(dvol)\\
&=&-\frac{1}{\text{vol}(\Sigma_t)}\big(\int_{\Sigma}fH\overline{\varphi}dvol\big)+\overline{\partial_t\varphi}+\frac{1}{\text{vol}(\Sigma_t)}\big(\int_{\Sigma}fH\varphi dvol\big)\\
&=&\overline{\partial_t\varphi}+\overline{fH(\varphi-\overline{\varphi})}.
\end{eqnarray*}

\end{proof}
\noindent Taking $\varphi=H$ in Proposition \ref{average}, we have
\begin{align}
&\partial_t\overline{H}=\overline{\partial_t H}+\overline{fH(H-\overline{H})}.
\end{align}
At $t=0$, by \eqref{h-con}, \eqref{h-evol} and $|A|=\frac {H^2}{n}$, it holds that
\begin{eqnarray*}
\partial_t(H-\overline{H})|_{t=0}&=&(\partial_tH-\partial_t\overline{H})|_{t=0}\\
&=&(\partial_tH-\overline{\partial_t H}-\overline{fH(H-\overline{H})})|_{t=0}\\
&=&(\partial_tH-\overline{\partial_t H})|_{t=0}\\
&=&-\Delta f-|A|^2f-ncf+\overline{ \Delta f+|A|^2f+ncf}\\
&=&-\Delta f-\frac{H^2}{n}f-ncf+\overline{ \Delta f+\frac{H^2}{n}f+ncf}\\
&=&-\Delta f-n\omega f+\overline{ \Delta f+n\omega f},
\end{eqnarray*}
where $\omega=\frac {H^2}{n^2}+c$. So
\begin{eqnarray}\label{4-H}
\nonumber
\partial_t^2(H-\overline{H})^2|_{t=0}&=&
2[\partial_t(H-\overline{H})]^2+2(H-\overline{H})\partial_t^2(H-\overline{H})\\
&=&2[\partial_t(H-\overline{H})]^2|\\
\nonumber
&=&2 (\Delta f+n\omega f-\overline{ \Delta f+n\omega f})^2
\end{eqnarray}
\noindent Thus   \eqref{second-vari-1} together with \eqref{4-A-2} and \eqref{4-H} yields that
\begin{prop}\label{second-vari-2} Let $\Sigma$ is a totally umbilical hypersurface in the space form.  It holds that
\begin{equation}
\frac{d^2}{dt^2}\mathcal{F}(\Sigma_t)|_{t=0}=2C^2\int_{\Sigma}|\text{\r Hess}f|^2(0)-2\int_{\Sigma}(\Delta f+n\omega f-\overline{ \Delta f+n\omega f})^2.
\end{equation}
\end{prop}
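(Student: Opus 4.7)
The plan is to assemble Proposition~\ref{second-vari-2} from three ingredients already in hand: the general second-variation identity of Proposition~\ref{varphi-1}, together with the pointwise computations \eqref{4-A-2} and \eqref{4-H}. No new calculation is required; the argument is essentially a verification that the hypotheses of Proposition~\ref{varphi-1} apply to the two summands of $\mathcal{F}$.

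First I would apply Proposition~\ref{varphi-1} separately to $\varphi_1 = |\text{\r A}|^2$ and to $\varphi_2 = (H-\overline{H})^2$. The required hypotheses $\varphi_i|_{t=0}=0$ and $\partial_t\varphi_i|_{t=0}=0$ have already been verified: for $\varphi_1$ in \eqref{a1-trace} and \eqref{a2-trace}, and for $\varphi_2$ in \eqref{h-con} and \eqref{h-1}. Applying the proposition termwise to the definition \eqref{f-1} of $\mathcal{F}$ yields precisely identity \eqref{second-vari-1},
\[
\frac{d^2}{dt^2}\mathcal{F}(\Sigma_t)\Big|_{t=0}
= C^2\int_\Sigma \partial_t^2|\text{\r A}|^2\Big|_{t=0}
- \int_\Sigma \partial_t^2(H-\overline{H})^2\Big|_{t=0}.
\]

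Next I would substitute the two pointwise formulas proved in \eqref{4-A-2} and \eqref{4-H}, namely
\[
\partial_t^2|\text{\r A}|^2\Big|_{t=0} = 2\,|\text{\r Hess}f|^2\big|_{t=0},
\quad
\partial_t^2(H-\overline{H})^2\Big|_{t=0} = 2(\Delta f + n\omega f - \overline{\Delta f + n\omega f})^2,
\]
into the preceding display. Collecting the two terms gives exactly the claimed formula.

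Since the heavy lifting has been carried out upstream — the evolution equations of Proposition~\ref{basic}, the umbilicity reductions $\text{\r A}|_{t=0}=0$ and $H|_{t=0}=\overline{H}$, and Proposition~\ref{average} used to compute $\partial_t\overline{H}$ — there is no substantive obstacle in this final step; it is a consolidation of the calculations of the section, set up so that Proposition~\ref{second-vari-2} will be ready for use in the next stage, where the sign of the right-hand side is analyzed to produce the deformation $\Sigma_t$ witnessing the optimality of the constant $\sqrt{n/(n-1)}$.
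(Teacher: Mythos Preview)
Your proposal is correct and follows essentially the same route as the paper: derive \eqref{second-vari-1} by applying Proposition~\ref{varphi-1} to $|\text{\r A}|^2$ and $(H-\overline{H})^2$ (using \eqref{a1-trace}, \eqref{a2-trace}, \eqref{h-con}, \eqref{h-1} to check the hypotheses), then substitute the pointwise identities \eqref{4-A-2} and \eqref{4-H}. The paper presents this as a one-line consolidation just as you describe.
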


In the following, we will show that 
\begin{thm} \label{thm-2der}Let $M_c^{n+1}$  be a space form of the constant sectional curvature $c$. Then there exists a closed  totally umbilical hypersurface $\Sigma$ in $M$ and a deformation $F(\cdot,t)$ of $\Sigma$ such that 
$$\frac{d^2}{dt^2}\mathcal{F}(\Sigma_t)|_{t=0}<0,$$ where 
$\mathcal{F}(\Sigma_t)$ is given by \eqref{f-1} with constant $ C<\sqrt{\frac{n}{n-1}}$.

\end{thm}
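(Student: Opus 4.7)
The plan is to specialise Proposition~4.4 (second-vari-2) to a geodesic sphere and test it against a Laplace eigenfunction. Take $\Sigma = S^n(a)$, a geodesic sphere of radius $a$ in $M_c^{n+1}$, with all principal curvatures equal to $\eta = H/n$. By the Gauss equation, the intrinsic curvature of $\Sigma$ is the constant
\[
K \;=\; c + \eta^2 \;=\; c + \frac{H^2}{n^2} \;=\; \omega,
\]
so the constant $\omega$ appearing in Proposition~4.4 is precisely the sectional curvature of $\Sigma$. In particular $(\Sigma,g)$ is a round sphere of constant curvature $K>0$, and its Ricci tensor is $\mathrm{Ric}=(n-1)K\,g$.

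Next I would let $f$ be a nonconstant Laplace eigenfunction on $\Sigma$ with eigenvalue $\lambda>0$, i.e.\ $\Delta f = -\lambda f$. Since $f$ has zero mean, $\overline{\Delta f + n\omega f} = (n K - \lambda)\,\bar f = 0$, so
\[
\int_{\Sigma}\bigl(\Delta f + n\omega f - \overline{\Delta f + n\omega f}\bigr)^{2}
\;=\;(\lambda - nK)^{2}\!\int_{\Sigma} f^{2}.
\]
For the Hessian term I would use Bochner's identity on $\Sigma$: integrating it and employing $\mathrm{Ric}=(n-1)Kg$ gives $\int_\Sigma |\mathrm{Hess}\, f|^2 = \int_\Sigma (\Delta f)^2 - (n-1)K \int_\Sigma |\nabla f|^2 = [\lambda^2 - (n-1)K\lambda]\int_\Sigma f^2$, and subtracting the trace part yields
\[
\int_{\Sigma}|\mathrm{\r{H}ess}\, f|^{2}
\;=\;\int_{\Sigma}|\mathrm{Hess}\, f|^{2} - \frac{1}{n}\!\int_{\Sigma}(\Delta f)^{2}
\;=\;\frac{n-1}{n}\,\lambda(\lambda - nK)\!\int_{\Sigma}f^{2}.
\]

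Plugging these two identities into Proposition~4.4 and factoring gives
\[
\frac{d^{2}}{dt^{2}}\mathcal{F}(\Sigma_{t})\Big|_{t=0}
\;=\; 2(\lambda - nK)\Bigl[\,nK - \alpha\lambda\,\Bigr]\!\int_{\Sigma}f^{2},
\qquad \alpha := 1 - \tfrac{n-1}{n}C^{2}.
\]
The hypothesis $C<\sqrt{n/(n-1)}$ is used here, and only here: it is exactly what makes $\alpha>0$. The spectrum of the round sphere is the unbounded sequence $\lambda_{k}=k(k+n-1)K$, so I can choose $f$ to be an eigenfunction with eigenvalue $\lambda = \lambda_k$ for $k$ large enough that $\lambda > nK/\alpha$. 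Then both $\lambda - nK > 0$ and $nK - \alpha\lambda < 0$, so the bracketed product is strictly negative, proving $\frac{d^{2}}{dt^{2}}\mathcal{F}(\Sigma_{t})|_{t=0}<0$.

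The main conceptual step, and the only place that could cause trouble, is the identification $\omega=K$; this is a one-line application of the Gauss equation but it is what allows the spectrum of $\Delta_\Sigma$ to interact cleanly with the coefficient $n\omega$ appearing in the formula. Everything else is a routine Bochner computation together with the elementary algebraic observation that $C^{2}<n/(n-1)$ makes the positive coefficient $\alpha$ of $\lambda$ strict, so that arbitrarily high eigenvalues overpower the $nK$ threshold.
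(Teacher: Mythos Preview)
Your argument is correct and follows the same strategy as the paper: take a geodesic sphere, use Bochner to evaluate the traceless-Hessian integral, and test against a Laplace eigenfunction of large eigenvalue; your identification $\omega=K$ and the clean factorization $2(\lambda-nK)(nK-\alpha\lambda)$ are a slightly tidier repackaging of the paper's explicit quadratic in $\xi(k)$ with negative leading coefficient. The one point the paper treats that you omit is the non--simply-connected case, which it handles by lifting to the universal cover and taking a small geodesic sphere inside a trivially covered neighborhood.
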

\begin{proof}
We first consider the case of  the simply connected space forms $M_c^{n+1}$,  i.e., the Euclidean space $\mathbb{R}^{n+1}, c=0$, the  Euclidean sphere $\mathbb{S}^{n+1}, c>0$, and the hyperbolic space  $\mathbb{H}^{n+1}, c<0$ respectively.  Here for convenience we take the Poincar\'e model for $\mathbb{H}^{n+1}$.  Given an $M_c^{n+1}$, its rotationally symmetric metric is denoted by    $$\tilde{g}=dr^2+sn_c^2( r)\eta,$$ 
where $\eta$ denotes the metric of the unit Euclidean sphere $\mathbb{S}^n$, $r$ denotes the distance  under the metric $\tilde{g}$ to the pole $o$ and $sn_c( r)$ is a function given by
$$sn_c( r)=\left\{
        \begin{array}{ll}
          r, & \hbox{if $c=0$}; \\
          \frac{\sin(\sqrt{c}r)}{\sqrt{c}}, & \hbox{if $c>0$};\\
           \frac{\sinh(\sqrt{|c|}r)}{\sqrt{|c|}}, & \hbox{if $c<0$}.
        \end{array}
      \right.$$
Now fix a number $a>0$  (in the case of $\mathbb{S}^{n+1}$, $0<a<\frac{\pi}{\sqrt{c}}$) and choose $\Sigma$ as the geodesic sphere $\mathbb{S}^n(a)
$ in $M$ with the  geodesic radius $a$ centered at $o$. It is well known that $\Sigma$ is totally umbilical. On the other hand, $\Sigma$ has the induced metric $g=sn_c^2( a)\eta$. The metric $g$ is the metric of the round sphere with the radius $sn_c(a)$ and so the Ricci curvatue of $\Sigma$ is $$Ric_{\Sigma}(\nabla f,\nabla f)=\frac {n-1}{sn^2_c( a)}|\nabla f|^2.$$
\noindent Recalling the Bochner formula
$$\frac12\Delta|\nabla f|^2=|\text{\text{\text{Hess}}} f|^2+\text{Ric}^{\Sigma}(\nabla f,\nabla f)+\left<\nabla f,\nabla(\Delta f)\right>,$$
and integrating it,  by the Stokes'  formula, we have
\begin{eqnarray*}
\int_{\Sigma}|\text{\text{\text{Hess}}} f|^2&=&\int_{\Sigma}(\Delta f)^2-\int_{\Sigma}\text{Ric}_{\Sigma}(\nabla f,\nabla f)\\
&=&\int_{\Sigma}(\Delta f)^2-\frac {n-1}{sn^2_c( a)} \int_{\Sigma}|\nabla f|^2
\end{eqnarray*}
\begin{eqnarray*}
\int_{\Sigma}|\text{\r Hess} f|^2&=&\int_{\Sigma}|\text{\text{\text{Hess}}} f|^2-\frac{1}{n}\int_{\Sigma}(\Delta f)^2\\
&=&\frac{n-1}{n}\int_{\Sigma}(\Delta f)^2-\frac {n-1}{sn^2_c( a)} \int_{\Sigma}|\nabla f|^2\\
&=&\frac{n-1}{n}\int_{\Sigma}(\Delta f)^2+\frac {n-1}{sn^2_c( a)} \int_{\Sigma}f\Delta f.
\end{eqnarray*}
Note that the closeness of $\Sigma$ implies the  average $\overline{\Delta f}=0$. Assume that  $f$ satisfies $\int_{\Sigma}f=0$ (such $f$ will be chosen later). Then 
$$\overline{ \Delta f+n\omega f}=\overline{ \Delta f}+n\omega\overline{ f}=0$$ and so
\begin{eqnarray*}
\int_{\Sigma}\left(\Delta f+n\omega f-\overline{ \Delta f+n\omega f}\right)^2&=&\int_{\Sigma}(\Delta f+n\omega f)^2\\
&=&\int_{\Sigma}(\Delta f)^2+2n\omega\int_{\Sigma}f\Delta f+n^2\omega^2\int_{\Sigma}f^2.
\end{eqnarray*}
Thus 
\begin{eqnarray*}
&&\frac {1}{2}\frac{d^2}{dt^2}\mathcal{F}(\Sigma_t)|_{t=0}\\
&=&\frac{C^2(n-1)}{n}\int_{\Sigma}(\Delta f)^2+\frac{C^2(n-1)}{sn^2_c( r) }\int_{\Sigma}f\Delta f\\
&  &\qquad -\int_{\Sigma}(\Delta f)^2-2n\omega\int_{\Sigma}f\Delta f-n^2\omega^2\int_{\Sigma}f^2\\
&=&[\frac{C^2(n-1)}{n}-1]\int_{\Sigma}(\Delta f)^2+[\frac{C^2(n-1)}{sn^2_c( a)}-2n\omega]\int_{\Sigma}(f\Delta f)-n^2\omega^2\int_{\Sigma}f^2\\
\end{eqnarray*}
Now we choose  $f$  to be an eigenfunction of the Laplacian on $(\Sigma,g)$ corresponding to the nonzero eigenvalue $ \xi(k)$, that is,
$$\Delta_gf=-\xi(k)f, \quad \int_{\Sigma}f=0.$$
 It is known that on $\Sigma$,
$$\Delta_g=\frac{1}{sn^2_c( a)}\Delta_{\eta}.$$
 Hence the nonzero eigenvalues $\xi(k)$ are 
$$\xi(k)=\frac{k(k+n-1)}{sn^2_c( a)}, k=1,2,\ldots$$
The sequence $\xi(k)$ increases  and diverges to the $+\infty$ as $k$ tends to $+\infty$. For such $f$,
\begin{align*}
& \frac {1}{2}\frac{d^2}{dt^2}\mathcal{F}(\Sigma_t)|_{t=0}\\
&=\big(\frac{C^2(n-1)}{n}-1\big)\int_{\Sigma}(\Delta f)^2+\big(\frac{C^2(n-1)}{sn^2_c( a)}-2n\omega\big)\int_{\Sigma}(f\Delta f)-n^2\omega^2\int_{\Sigma}f^2\\
&=\big(\frac{C^2(n-1)}{n}-1\big)\xi(k)^2\int_{\Sigma}f^2-\big(\frac{C^2(n-1)}{sn^2_c( a)}-2n\omega\big)\xi(k)\int_{\Sigma}f^2-n^2\omega^2\int_{\Sigma}f^2\\
&=\bigg[\bigg(\frac{C^2(n-1)}{n}-1\bigg)\xi(k)^2-\bigg(\frac{C^2(n-1)}{sn^2_c( a)}-2n\omega\bigg)\xi(k)-n^2\omega^2\bigg]\int_{\Sigma}f^2\\
\end{align*}
When $C<\sqrt{\frac{n-1}{n}}$, the coefficient of $\xi^2(k)$ in the last equality is negative. Hence, if $\xi$ big enough,  the quadratic  polynomial  $\frac {1}{2}\frac{d^2}{dt^2}\mathcal{F}(\Sigma_t)|_{t=0}$ is negative. By the property of $\xi(k)$, there exists a $k_0$ sufficiently large so that $k\geq k_0$, $\frac {1}{2}\frac{d^2}{dt^2}\mathcal{F}(\Sigma_t)|_{t=0}<0$. 
This is the case of the  simply connected $M_c^{n+1}$. Observe that the geodesic radius $a$ of the geodesic sphere $\Sigma$ can be arbitrarily chosen only if it  makes sense.

If $M_c^{n+1}$ is not simply connected, we consider  the universal covering map $\pi: \tilde{M}\to M_c^{n+1}$.  It is known that $\pi$ is a local isometry. Let $\Omega\subset \tilde{M}$ be a neighborhood of the pole $o\in \tilde{M}$ such that $\pi:\Omega\to \pi(\Omega)\subset M$ is an isometry. In $\Omega$, by the conclusion of the simply connected space form,  there exists a geodesic sphere $\tilde{\Sigma}\subset\Omega$ (with small geodesic radius) and a deformation $\tilde{F}(\cdot,t):\tilde{\Sigma}\times(-\epsilon,\epsilon)\to \mathbb{R}$, which has the conclusion of the theorem.  Take $\Sigma=\pi(\tilde{\Sigma})$ and the deformation  $F(\cdot,t)=\tilde{F}(\pi^{-1}(\cdot),t)$. Then  $\Sigma$ and $\mathcal{F} $ satisfy  the theorem.

\end{proof}

\noindent {\it {The proof of Theorem \ref{opt-1}}}. For the deformation $F(x,t)$ given in Theorem \ref{thm-2der} with $\Sigma_0=\mathbb{S}^n$, the functional $\mathcal{F}$ satisfies
\begin{equation}\label{deri}
\mathcal{F}(\Sigma_t)|_{t=0}=\frac{d}{dt}\mathcal{F}(\Sigma_t)|_{t=0}=0, \quad\frac{d^2}{dt^2}\mathcal{F}(\Sigma_t)|_{t=0}<0.
\end{equation}
 \eqref{deri}  implies that $\mathcal{F}(\Sigma_t)<0$
 for $t$ sufficiently small, that is, it  holds on $\Sigma_t$:
 \begin{eqnarray}\label{ine-opt1-2}
\int_{\Sigma_t}\left(H-\overline{H}\right)^2&>& C^2\int_{\Sigma_t}|A-\frac{H}{n}g|^2.
 \end{eqnarray} 
  By \eqref{ine-opt1-2} and the identity:
 \begin{equation}
|A-\frac{\overline{H}}{n}g|^2= |A-\frac{H}{n}g|^2+\frac{1}{n}\left(H-\overline{H}\right)^2,
 \end{equation} 
 we have that for $C<\sqrt{\frac{n}{n-1}}$,
 \begin{eqnarray}
\nonumber
\int_{\Sigma_t}|A-\frac{\overline{H}}{n}g|^2&>& \left(1+\frac{C^2}{n}\right)\int_{\Sigma_t}|A-\frac{H}{n}g|^2\\
&>&C^2\int_{\Sigma_t}|A-\frac{H}{n}g|^2.
 \end{eqnarray} 
 Since $\Sigma_{t}$ is arbitrarily close to $\mathbb{S}^n(a)$, the Ricci curvature $\text{Ric}_{\Sigma_t}>0$. 
 So  we complete the proof  of theorem.

\qed

\section{Proof of Theorem \ref{opt-2}}\label{sec-rmean}

In this section, we first give the needed evolution equation of $s_r$ (roughly $r$-th mean curvatures) under the general normal deform. Next we prove Theorem \ref{opt-2}. For $r\geq 2$,  instead of calculating the complicated evolution equations of $P_r$, we compute directly the  corresponding values at $t=0$ by using the fact that $\Sigma=\Sigma_0$ is  totally umbilical (see \eqref{pr-evol-0}). 

Consider the normal deformation $F(x,t)$ of hypersurfaces in \eqref{evol-g}. 
  Recall a result proved by Reilly \cite{R}.
      \begin{prop}\cite{R}\label{reilly} Let $B = B(t)$  be a smooth one-parameter family of diagonalizable linear transformation of the vector space $V$,  $\sigma_r$ the symmetric functions of the eigenvalues of $B$ and $Q_r$  the Newton transformation with respect to $B$. Then for $r = 0, 1, \ldots, n$ we have
$$\partial_t\sigma_{r+1} = tr((\partial_tB)Q_r).$$
\end{prop}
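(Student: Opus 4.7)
The plan is to derive the identity via the generating function
\begin{equation*}
f(t,s) := \det(I + s B(t)) = \sum_{r=0}^{n} s^{r}\, \sigma_{r}(B(t)),
\end{equation*}
and then match powers of $s$ on the two expressions I will obtain for $\partial_t f$.

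First, I would apply Jacobi's formula $\partial_t \det M = \det M \cdot \text{tr}(M^{-1}\,\partial_t M)$ with $M = I + sB(t)$. Multiplying through by $\det(I+sB)$ gives
\begin{equation*}
\partial_t f = s \cdot \text{tr}\bigl(\text{adj}(I+sB)\, \partial_t B\bigr),
\end{equation*}
which is a polynomial identity in $s$ and needs no invertibility assumption.

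The key step is to recognise that $\text{adj}(I+sB) = \sum_{k=0}^{n-1} s^{k} Q_{k}$. Writing the adjugate tentatively as $\sum_{k} s^{k} C_{k}$ and equating coefficients of each power of $s$ in the identity $(I+sB)\,\text{adj}(I+sB) = f\cdot I$ gives $C_{0}=I$ and the recursion
\begin{equation*}
C_{r} = \sigma_{r} I - B C_{r-1}, \qquad r \geq 1,
\end{equation*}
which is precisely the defining recursion $P_{r} = \sigma_{r}I - AP_{r-1}$ for the Newton transformations applied to $B$. Hence $C_{k} = Q_{k}$ for all $k$. Substituting back and comparing the coefficient of $s^{r+1}$ in
\begin{equation*}
\sum_{r=1}^{n} s^{r}\, \partial_t \sigma_{r} \;=\; \partial_t f \;=\; \sum_{k=0}^{n-1} s^{k+1}\, \text{tr}\bigl((\partial_t B)\, Q_{k}\bigr)
\end{equation*}
yields the claimed formula $\partial_t \sigma_{r+1} = \text{tr}((\partial_t B)\, Q_{r})$.

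There is no genuine obstacle in this approach; the proof is essentially algebraic once the generating function is introduced, and the only point requiring any care is the short induction identifying the coefficients of the adjugate with the $Q_{k}$. It is worth noting that the diagonalizability hypothesis in the statement is actually unnecessary, because both sides of the identity are polynomials in the entries of $B$ and $\partial_{t}B$. I would prefer this route over differentiating the eigenvalue formula $\sigma_{r+1} = \sum_{i_{1}<\dots<i_{r+1}} \eta_{i_{1}}\cdots \eta_{i_{r+1}}$ directly, since the eigenvalues need not be smooth functions of $t$ when multiplicities cross, whereas the generating-function proof stays entirely at the level of polynomial manipulations.
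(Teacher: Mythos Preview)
Your proof is correct and self-contained. Note, however, that the paper does not actually prove this proposition: it is quoted as a known result of Reilly \cite{R} and simply applied to the shape operator $A$. So there is no ``paper's own proof'' to compare against.

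That said, your generating-function argument via $\det(I+sB)$ and Jacobi's formula is a clean way to establish the identity. The identification $\mathrm{adj}(I+sB)=\sum_{k}s^{k}Q_{k}$ through the recursion $C_{r}=\sigma_{r}I-BC_{r-1}$ is exactly right and matches the paper's stated recursion $P_{r}=\sigma_{r}I-AP_{r-1}$. Your observation that diagonalizability is unnecessary is also correct: both sides are polynomial in the entries of $B$ and $\partial_{t}B$, so the identity holds on all matrices once it holds on a Zariski-dense set (or, as you argue, directly from the polynomial manipulation without ever invoking eigenvalues). This is indeed more robust than differentiating $\sigma_{r+1}=\sum_{i_{1}<\cdots<i_{r+1}}\eta_{i_{1}}\cdots\eta_{i_{r+1}}$, which would require smoothness of eigenvalues. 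One minor point: the adjugate has degree $n-1$ in $s$, so the top coefficient gives $Q_{n-1}$; the case $r=n$ in the statement then reads $\partial_{t}\sigma_{n+1}=\mathrm{tr}((\partial_{t}B)Q_{n})=0$, which is consistent since $\sigma_{n+1}=0$ and $Q_{n}=0$, but is not produced by the coefficient comparison---it holds trivially.
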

\noindent Applying Proposition \ref{reilly} to the shape operator $A$, by \eqref{a-ij},  we have 
\begin{eqnarray*}
& &\frac{\partial }{\partial t}(s_r)\\
&=&\text{tr}((\partial_tA)P_{r-1})\\
 &=&-\text{tr}(P_{r-1}\text{Hess} f)-f\text{tr}(P_{r-1}A^2)-cf\text{tr}P_{r-1}.
\end{eqnarray*}
\noindent By \eqref{tr-1} and \eqref{tr-3},
\begin{eqnarray*}
 \frac{\partial }{\partial t}(s_r)&=&-\text{tr}(P_{r-1}\text{\text{Hess}} f)-f\big(s_1s_r-(r+1)s_{r+1}\big)-c(n-r+1)s_{r-1}.
 \end{eqnarray*}
So it holds that
\begin{cor}\cite{R} Under \eqref{evol-g},
\begin{eqnarray}
&&\frac{\partial }{\partial t}(s_r)\nonumber\\
 &=&-\text{tr}(P_{r-1}\text{Hess} f)-f\text{tr}(P_{r-1}A^2)-cf\text{tr}P_{r-1}\label{evol-s}\\
 &=&-\text{tr}(P_{r-1}\text{Hess} f)-f\big(s_1s_r-(r+1)s_{r+1}\big)-c(n-r+1)s_{r-1}.
\end{eqnarray}
\end{cor}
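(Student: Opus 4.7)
The plan is to deduce the corollary as a direct application of Reilly's Proposition to the time-dependent family of shape operators $A(t)$ of the evolving hypersurfaces $\Sigma_t$, combined with the evolution equation \eqref{a-ij} and the trace identities \eqref{tr-1} and \eqref{tr-3}.

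First I would take $B(t) = A(t)$, the shape operator of $\Sigma_t$, viewed as a one-parameter family of symmetric (hence diagonalisable) endomorphisms of the tangent bundle. Under this identification, the elementary symmetric functions $\sigma_k$ become the $s_k$ and the Newton transformations $Q_k$ become the $P_k$ of the paper. Reilly's proposition, applied with index $r$ replaced by $r-1$, then yields
\begin{equation*}
\partial_t s_r \;=\; \operatorname{tr}\bigl((\partial_t A)\, P_{r-1}\bigr).
\end{equation*}
A small bookkeeping point I would verify is that Proposition \ref{reilly} is stated for the mixed-type tensor ($(1,1)$-version) of $A$, which is exactly the version whose evolution is given by \eqref{a-ij}, so no metric time-derivatives are smuggled in.

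Next I would substitute \eqref{a-ij}, namely $\partial_t A_i^{\,j} = -(\operatorname{Hess} f + fA^2 + cfg)_i^{\,j}$, into the expression above. Linearity of the trace, together with the fact that $g$ acts as the identity in the $(1,1)$-picture (so $\operatorname{tr}(P_{r-1}\cdot g) = \operatorname{tr} P_{r-1}$), produces
\begin{equation*}
\partial_t s_r \;=\; -\operatorname{tr}(P_{r-1}\operatorname{Hess} f) \;-\; f\operatorname{tr}(P_{r-1}A^2) \;-\; cf\operatorname{tr} P_{r-1},
\end{equation*}
which is the first displayed formula \eqref{evol-s} of the corollary.

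Finally, to obtain the second displayed form, I would apply the basic trace identities of Reilly. Identity \eqref{tr-1} with $r$ replaced by $r-1$ gives $\operatorname{tr} P_{r-1} = (n-r+1)s_{r-1}$, while identity \eqref{tr-3} with $r$ replaced by $r-1$ gives $\operatorname{tr}(A^2 P_{r-1}) = s_1 s_r - (r+1)s_{r+1}$. Inserting these two expressions yields the second formula of the corollary. There is no real obstacle here; the only thing to be careful about is the index shift in the trace identities, since \eqref{tr-1} and \eqref{tr-3} are stated for $P_r$ rather than $P_{r-1}$, and one must accordingly shift the indices on $s_r$, $s_{r+1}$, and the combinatorial factors $(n-r)$, $(r+2)$ by one.
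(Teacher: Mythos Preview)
Your proposal is correct and follows exactly the same route as the paper: apply Reilly's proposition to obtain $\partial_t s_r = \operatorname{tr}((\partial_t A)P_{r-1})$, substitute the $(1,1)$-evolution \eqref{a-ij}, and then simplify via the trace identities \eqref{tr-1} and \eqref{tr-3} with the index shifted by one. Your remarks about working with the $(1,1)$-tensor version of $A$ to avoid stray metric derivatives and about the index shift are exactly the care the paper (implicitly) takes.
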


 To show Theorem \ref{opt-2}, we will use the same approach as in the proof of Theorem \ref{opt-1}. In the rest of this section, we assume that $F(x,t):\Sigma\times [0,T)\to M^{n+1}_c$ is  the solution of the normal deformation  \eqref{evol-1} and $\Sigma$ is a totally umbilical hypersurface.
  Define the functional $\mathcal{G}: \Sigma_t=F(\Sigma,t)\to \mathbb{R}$ given by
\begin{equation}\label{f-2}
\mathcal{G}(\Sigma_t)=C^2\int_{\Sigma}|\text {\r P}_r|^2-\int_{\Sigma}\left(s_r-\overline{s_r}\right)^2,
\end{equation}
where $C$ is a constant and the subscripts $t$ of  $\text {\r P}_r, s_r$ and $\overline{s_r}$ are omitted.

  First we need some combinatorial identities.
\begin{prop}\label{combin}
\begin{eqnarray}
\sum_{i=0}^{r} (-1)^i \binom{n}{r-i} &= &\binom{n-1}{r}\label{combin-1}\\
\binom{n+1}{r} &=& \binom{n}{r} + \binom{n}{r-1}\label{combin-2}\\
n \binom{n-1}{r} &=& (n-r) \binom{n}{r}\label{combin-3}\\
\sum_{i=0}^{r} (-1)^i \binom{n}{r-i}i &=& - \binom{n-2}{r-1}\label{combin-4}
\end{eqnarray}
\end{prop}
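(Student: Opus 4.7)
The plan is to establish the four identities by elementary combinatorial manipulations, with Pascal's rule as essentially the only ingredient. Identity \eqref{combin-2} is Pascal's rule itself, so no argument is needed. Identity \eqref{combin-3} is an immediate factorial computation: $n\binom{n-1}{r} = \frac{n!}{r!(n-1-r)!} = (n-r)\binom{n}{r}$.

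For identity \eqref{combin-1} the cleanest route is to iterate Pascal's rule. Starting from $\binom{n-1}{r} = \binom{n}{r} - \binom{n-1}{r-1}$ and then repeatedly substituting $\binom{n-1}{k} = \binom{n}{k} - \binom{n-1}{k-1}$ into the last term yields the telescoping expansion
\[
\binom{n-1}{r} = \sum_{i=0}^{r}(-1)^i \binom{n}{r-i},
\]
where the recursion terminates using $\binom{n-1}{-1}=0$. A formal induction on $r$ with exactly this step also works.

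For identity \eqref{combin-4}, which is the main obstacle, the plan is to derive a one-step recursion and close by induction on $r$. Setting $T_r := \sum_{i=0}^{r}(-1)^i\, i\binom{n}{r-i}$ and shifting the index $i \mapsto j+1$ gives
\[
T_r = -\sum_{j=0}^{r-1}(-1)^j(j+1)\binom{n}{r-1-j} = -T_{r-1} - \sum_{j=0}^{r-1}(-1)^j\binom{n}{r-1-j}.
\]
By identity \eqref{combin-1} applied with $r$ replaced by $r-1$, the trailing sum is $\binom{n-1}{r-1}$, hence $T_r = -T_{r-1} - \binom{n-1}{r-1}$. The base case $T_0 = 0 = -\binom{n-2}{-1}$ holds trivially, and assuming $T_{r-1} = -\binom{n-2}{r-2}$ one obtains $T_r = \binom{n-2}{r-2} - \binom{n-1}{r-1}$, which equals $-\binom{n-2}{r-1}$ by one application of Pascal's rule $\binom{n-1}{r-1} = \binom{n-2}{r-1} + \binom{n-2}{r-2}$.

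The only subtlety I anticipate is careful bookkeeping of the endpoint terms together with the convention $\binom{m}{-1}=0$; aside from that, the argument is a direct chain of Pascal manipulations and a short induction, so no genuine difficulty should arise.
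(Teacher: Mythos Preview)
Your proof is correct. For \eqref{combin-1}--\eqref{combin-3} you and the paper agree (the paper simply declares them well known). For \eqref{combin-4} your argument is genuinely different from the paper's.

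The paper proceeds by a direct computation: it writes $i = -(r-i) + r$, splits the sum as
\[
-\sum_{i=0}^{r}(-1)^i\binom{n}{r-i}(r-i) \;+\; r\sum_{i=0}^{r}(-1)^i\binom{n}{r-i},
\]
applies the absorption identity $(r-i)\binom{n}{r-i}=n\binom{n-1}{r-i-1}$ (which is essentially \eqref{combin-3}) to the first sum, then invokes \eqref{combin-1} on both sums to obtain $-n\binom{n-2}{r-1}+r\binom{n-1}{r}$, and finally simplifies with \eqref{combin-3} again. Your route instead sets up the recursion $T_r=-T_{r-1}-\binom{n-1}{r-1}$ via an index shift and closes by induction using one instance of Pascal's rule. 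Your argument is slightly more economical in that it uses only \eqref{combin-1} and Pascal, avoiding the absorption trick and the trailing algebraic simplification; the paper's argument has the virtue of being a single closed-form manipulation with no induction. Both are entirely elementary and of comparable length.
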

\eqref{combin-1}, \eqref{combin-2} and \eqref{combin-3} are well known. Since we couldn't find the adequate reference for \eqref{combin-4},  for the completeness of the proof, we prove \eqref{combin-4} here.
\begin{proof}
\begin{eqnarray}
\nonumber
&&\sum_{i=0}^{r} (-1)^i \binom{n}{r-i}i\\
 &=& -\sum_{i=0}^{r} (-1)^i \binom{n}{r-i}(r-i) + \sum_{i=0}^{r} (-1)^i \binom{n}{r-i}r\nonumber\\
\nonumber
&=& -\sum_{i=0}^{r-1} (-1)^i \frac{n(n-1)\cdots [n-(r-i)+1]}{(r-i)!}(r-i) + r \sum_{i=0}^{r} (-1)^i \binom{n}{r-i}\\
\nonumber
&=& -n \sum_{i=0}^{r-1} (-1)^i \frac{(n-1)\cdots [n-1-(r-i-1)+1]}{(r-i-1)!} + r \binom{n-1}{r}\\
\nonumber
&=& -n \sum_{i=0}^{r-1} (-1)^i \binom{n-1}{r-i-1} + r \binom{n-1}{r}\\
\nonumber
&=& -n \binom{n-2}{r-1} + r \binom{n-1}{r}\\
\nonumber
&=& -n \frac{n-r}{n-1} \binom{n-1}{r-1} + (n-r) \binom{n-1}{r-1}\\
&=&  -\frac{n-r}{n-1} \binom{n-1}{r-1}.\label{combin-4-4} 
\end{eqnarray}
In the verification of \eqref{combin-4-4}, we used \eqref{combin-1}. By \eqref{combin-3} and \eqref{combin-4-4},
\begin{equation}
\sum_{i=0}^{r} (-1)^i \binom{n}{r-i}i = - \binom{n-2}{r-1}.
\end{equation}
\end{proof}

Recall that  $\Sigma$ is a closed totally umbilical hypersurface, i.e., a geodesic sphere $\mathbb{S}^n(a)$ with the geodesic radius $a$  in the space form  $M_c^{n+1}$. Denote by $\lambda$ the principle curvatures of $\Sigma$. $\lambda$ is a constant.
So  for $\Sigma=\Sigma_0$,
\begin{eqnarray}
H&=&n\lambda,  \\
 A&=&\frac{H}{n}I=\lambda I,\\
A^k&=&\lambda^kI, \\
s_r&=&\displaystyle\sum_{i_1<\cdots<i_r}\lambda_{i_1}\ldots \lambda_{i_r}=\displaystyle\sum_{i_1<\cdots<i_r}\lambda^r=\binom{n}{r}\lambda^r,\label{sr-exp}\\
 \overline{s_r}&=&s_r.\label{sr-const}
\end{eqnarray}
     Moreover, by the definition \eqref{def-pr} of $P_r$ and \eqref{combin-1}, for $1\leq r\leq n$,
    \begin{eqnarray} 
     P_r&=&\displaystyle\sum_{j=0}^r(-1)^js_{r-j}A^j\nonumber\\
     &=&\displaystyle\sum_{j=0}^r(-1)^j\binom{n}{r-j}\lambda^{r-j}\lambda^jI\nonumber\\
     &=&\lambda^r\binom{n-1}{r}I.\label{id-pr}
\end{eqnarray}
By \eqref{tr-1}, \eqref{combin-3}, \eqref{sr-exp}, and  \eqref{id-pr}.
      \begin{eqnarray}
      \text{\r P}_r&=&0. \label{traceless}
     \end{eqnarray}
       \noindent     So \eqref{traceless} and \eqref{sr-const} imply that $\mathcal{G}(\Sigma_t)|_{t=0}=0$. 
The first variation of $\mathcal{G}$ at $t=0$ can be calculated as follows.
\begin{eqnarray}
\partial_t(|\text {\r P}_r|^2)|_{t=0}&=&\partial_t[ (\text {\r P}_r)_{i}^j(\text {\r P}_r)_{j}^i]|_{t=0}\nonumber\\
&=&[\partial_t (\text {\r P}_r)_{i}^j](\text {\r P}_r)_{j}^i|_{t=0}+ (\text {\r P}_r)_{i}^j\partial_t[(\text {\r P}_r)_{j}^i]|_{t=0}\nonumber\\
&=&0.\label{pr-evol-1}
\end{eqnarray}
\begin{eqnarray}
\partial_t(s_r-\overline{s_r})^2|_{t=0}=2(s_r-\overline{s_r})\partial_t(s_r-\overline{s_r})|_{t=0}=0.\label{sr-evol-2}
\end{eqnarray}
So
\begin{eqnarray*}
\frac{d}{dt}\mathcal{G}(\Sigma_t)|_{t=0}&=&\big[C^2\int_{\Sigma}|\text {\r P}_r|^2\partial_t (dvol)+C^2\int_{\Sigma}\partial_t(|\text {\r P}_r|^2)dvol\\
& &-\int_{\Sigma}\partial_t(s_r-\overline{s_r})^2dvol-\int_{\Sigma}(s_r-\overline{s_r})^2\partial_t(dvol)\big]|_{t=0}\\
&=&0.
\end{eqnarray*}
We obtain that
\begin{prop} For  $\Sigma$, 
$\frac{d}{dt}\mathcal{G}(\Sigma_t)|_{t=0}=0$.
\end{prop}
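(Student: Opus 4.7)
The plan is to differentiate $\mathcal{G}(\Sigma_t)$ by applying the product rule to both integrals appearing in its definition \eqref{f-2}, and then show that every resulting term vanishes at $t=0$ because $\Sigma$ is totally umbilical. Concretely, writing
\[
\frac{d}{dt}\mathcal{G}(\Sigma_t)=C^2\int_{\Sigma}\partial_t(|\text{\r P}_r|^2)\,dvol+C^2\int_{\Sigma}|\text{\r P}_r|^2\,\partial_t(dvol)-\int_{\Sigma}\partial_t(s_r-\overline{s_r})^2\,dvol-\int_{\Sigma}(s_r-\overline{s_r})^2\,\partial_t(dvol),
\]
the task reduces to showing the integrands vanish at $t=0$.

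The first step is to evaluate the relevant geometric quantities on the totally umbilical $\Sigma=\Sigma_0=\mathbb{S}^n(a)$. The principal curvatures coincide with a single constant $\lambda$, so $A=\lambda I$. Plugging this into the definition \eqref{def-pr} of the Newton transformation and using \eqref{combin-1} yields $P_r=\lambda^r\binom{n-1}{r}I$, which together with \eqref{tr-1} and \eqref{combin-3} gives the pointwise identity $\text{\r P}_r=0$ at $t=0$. At the same time, \eqref{sr-exp} shows that $s_r=\binom{n}{r}\lambda^r$ is constant on $\Sigma$, so $s_r=\overline{s_r}$ at $t=0$. These two facts are the only ingredients needed.

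From $\text{\r P}_r|_{t=0}=0$ and $(s_r-\overline{s_r})|_{t=0}=0$, the two volume-form terms $\int_{\Sigma}|\text{\r P}_r|^2\,\partial_t(dvol)$ and $\int_{\Sigma}(s_r-\overline{s_r})^2\,\partial_t(dvol)$ vanish immediately. For the remaining two terms, I apply the product rule to the quadratic expressions: since $\partial_t(|\text{\r P}_r|^2)=2(\partial_t\text{\r P}_r{}_i^j)(\text{\r P}_r)_j^i$ and $\partial_t(s_r-\overline{s_r})^2=2(s_r-\overline{s_r})\partial_t(s_r-\overline{s_r})$, each carries a factor of $\text{\r P}_r$ or $s_r-\overline{s_r}$, and therefore both vanish at $t=0$ as well. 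Combining these four observations gives $\frac{d}{dt}\mathcal{G}(\Sigma_t)|_{t=0}=0$.

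There is no real obstacle here; the proposition is purely the first-variation analogue of the computations already performed for $\mathcal{F}$ in Proposition \ref{first-vari}. The one point requiring attention is confirming that $\text{\r P}_r=0$ on a totally umbilical $\Sigma$, which is not assumed a priori but follows from the explicit formula \eqref{def-pr} and the combinatorial identity \eqref{combin-1}. Once that pointwise vanishing is in hand, the rest is a routine product-rule argument that avoids ever computing $\partial_t P_r$ or $\partial_t s_r$ in closed form—an important simplification, since those evolution equations are considerably more involved for $r\geq 2$.
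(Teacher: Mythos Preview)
Your proof is correct and follows essentially the same approach as the paper: differentiate under the integral sign, use $\text{\r P}_r|_{t=0}=0$ (from \eqref{traceless}) and $s_r=\overline{s_r}$ at $t=0$ (from \eqref{sr-const}) to kill the volume-form terms, and apply the product rule to the quadratic integrands so that each carries a vanishing factor. The paper's argument is identical in structure, citing \eqref{pr-evol-1} and \eqref{sr-evol-2} for the two derivative terms.
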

Next we  calculate the second variation of $\mathcal{G}(\Sigma_t)$ at $t=0$.
By \eqref{sr-const}, \eqref{traceless}, \eqref{pr-evol-1}  and \eqref{sr-evol-2},  Proposition \ref{average} implies that 
\begin{eqnarray}\label{r-secvar-1}\frac{d^2}{dt^2}\mathcal{G}(\Sigma_t)|_{t=0}=C^2\int_{\Sigma}\partial^2_t|\text {\r P}_r|^2|_{t=0}-\int_{\Sigma}\partial_t^2(s_r-\overline{s_r})^2|_{t=0}.
\end{eqnarray}
We have 
\begin{eqnarray*}\partial_t(|\text {\r P}_r|^2)=[\partial_t(\text {\r P}_r)_i^j](\text {\r P}_r)_j^i+(\text {\r P}_r)_i^j[\partial_t(\text {\r P}_r)_i^j],
\end{eqnarray*}
\begin{eqnarray*}\partial_t^2(|\text {\r P}_r|^2)=[\partial_t\partial_t(\text {\r P}_r)_i^j](\text {\r P}_r)_j^i+2[\partial_t(\text {\r P}_r)_i^j][\partial_t(\text {\r P}_r)_j^i]+(\text {\r P}_r)_i^j[\partial_t\partial_t(\text {\r P}_r)_i^j].
\end{eqnarray*}
So
\begin{eqnarray}\label{pr-1}\partial_t^2(|\text {\r P}_r|^2)|_{t=0}=2[\partial_t(\text {\r P}_r)_i^j][\partial_t(\text {\r P}_r)_j^i]_{t=0}=2|\partial_t\text {\r P}_r|^2_{t=0},
\end{eqnarray}
where the $(1,1)$-tensor $\partial_tP_r$ is defined by $(\partial_tP_r)_i^j:=\partial_t[(P_r)_i^j]$. On the other hand, we have 
\begin{eqnarray*}
\partial_t(s_r-\overline{s_r})^2=
2(s_r-\overline{s_r})\partial_t(s_r-\overline{s_r}).
\end{eqnarray*}
\begin{eqnarray*}\partial_t^2(s_r-\overline{s_r})^2=
2(s_r-\overline{s_r})\partial_t^2(s_r-\overline{s_r})+2[\partial_t(s_r-\overline{s_r})]^2.\end{eqnarray*}
\begin{eqnarray}\label{sr-1}\partial_t^2(s_r-\overline{s_r})^2(0)=2[\partial_t(s_r-\overline{s_r})]^2(0).\end{eqnarray}
So \eqref{r-secvar-1}, \eqref{pr-1} and \eqref{sr-1} imply that

\begin{prop}
\begin{eqnarray}\label{r-secvar-2}\frac{d^2}{dt^2}\mathcal{G}(\Sigma_t)|_{t=0}=2C^2\int_{\Sigma}(\partial_t|\text {\r P}_r|)^2|_{t=0}-2\int_{\Sigma}[\partial_t(s_r-\overline{s_r})]^2|_{t=0}.
\end{eqnarray}
\end{prop}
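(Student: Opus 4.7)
The statement is an identity collecting three pieces of bookkeeping that have essentially already been laid out just above, so the plan is simply to assemble them.

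First I would invoke Proposition \ref{varphi-1} termwise on the two integrals defining $\mathcal{G}(\Sigma_t)$, with $\varphi=|\text{\r P}_r|^2$ for the first integral and $\varphi=(s_r-\overline{s_r})^2$ for the second. The hypotheses of Proposition \ref{varphi-1}---that the pointwise integrand and its first $t$-derivative vanish at $t=0$---are exactly the observations \eqref{traceless} and \eqref{sr-const} together with \eqref{pr-evol-1} and \eqref{sr-evol-2}, all of which rely crucially on $\Sigma_0$ being totally umbilical. This step immediately yields
\[
\frac{d^2}{dt^2}\mathcal{G}(\Sigma_t)|_{t=0}=C^2\int_{\Sigma}\partial_t^2|\text{\r P}_r|^2|_{t=0}-\int_{\Sigma}\partial_t^2(s_r-\overline{s_r})^2|_{t=0},
\]
namely \eqref{r-secvar-1}.

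Second, I would expand each of the pointwise second $t$-derivatives by the Leibniz rule. For the $|\text{\r P}_r|^2$ term,
\[
\partial_t^2|\text{\r P}_r|^2 = 2\langle\partial_t^2\text{\r P}_r,\text{\r P}_r\rangle+2|\partial_t\text{\r P}_r|^2,
\]
and evaluating at $t=0$ drops the first summand because $\text{\r P}_r|_{t=0}=0$; this is \eqref{pr-1}. The identical manipulation applied to the scalar $(s_r-\overline{s_r})^2$, using $(s_r-\overline{s_r})|_{t=0}=0$, is \eqref{sr-1}. Substituting both expansions into \eqref{r-secvar-1} delivers the stated formula.

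There is no serious obstacle at this step; the proposition is essentially the integral form of the pointwise identities \eqref{pr-1} and \eqref{sr-1}, combined with Proposition \ref{varphi-1}. The real work is deferred: to run the sign argument for Theorem \ref{opt-2} in the $r\geq 2$ case one still has to compute $\partial_t\text{\r P}_r|_{t=0}$ and $\partial_t(s_r-\overline{s_r})|_{t=0}$ explicitly from \eqref{evol-s} and the Newton-transformation identities (this is where Proposition \ref{combin} enters), and then choose $f$ as a Laplacian eigenfunction of high frequency on the geodesic sphere so that the leading $\xi(k)^2$ coefficient is negative precisely when $C_1<\sqrt{n(n-1)/(n-r)^2}$ (respectively $C_2<\sqrt{n}$).
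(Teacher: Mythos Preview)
Your proposal is correct and follows essentially the same route as the paper: apply Proposition \ref{varphi-1} to each integrand (the paper's reference to Proposition \ref{average} before \eqref{r-secvar-1} is evidently a typo for \ref{varphi-1}), then expand $\partial_t^2|\text{\r P}_r|^2$ and $\partial_t^2(s_r-\overline{s_r})^2$ by Leibniz and use $\text{\r P}_r|_{t=0}=0$, $(s_r-\overline{s_r})|_{t=0}=0$ to kill the cross terms, exactly as in \eqref{pr-1} and \eqref{sr-1}. Your closing remarks about the subsequent computation and eigenfunction choice are also accurate.
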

By \eqref{tr-1}, \eqref{evol-s}, \eqref{sr-exp} and  \eqref{id-pr},
\begin{eqnarray}
& &\frac{\partial }{\partial t}(s_r)|_{t=0}\nonumber\\
 &=&-\lambda^{r-1}\binom{n-1}{r-1}\text{tr} \text{\text{Hess}} f|_{t=0}-f\lambda^{r-1}\binom{n-1}{r-1}\lambda^2n-ncf\lambda^{r-1}\binom{n-1}{r-1}\nonumber\\
 &=&-\binom{n-1}{r-1}\lambda^{r-1}(\Delta f+n\lambda^2f+ncf). \label{var-sr1}
\end{eqnarray}
 Note on $\Sigma$,  $A=\lambda I$. Hence on $\Sigma$, $A(\text{\text{\text{Hess}}}f)=(\text{\text{\text{Hess}}}f) A$. This property let us prove the following   conclusion.
        
      \begin{prop} \label{a}
      \begin{eqnarray}
      \partial_t(A^m)_i^j|_{t=0}&=&-m[(\text{\text{\text{Hess}}}  f) A^{m-1}+fA^{m+1}+cf(A^{m-1})]_i^j|_{t=0}\label{a-1}\\
      &=&-m\lambda^{m-1}[\text{\text{\text{Hess}}}  f +\lambda^{2}f I+cf I]_i^j|_{t=0}\label{a-2}
      \end{eqnarray}
      \end{prop}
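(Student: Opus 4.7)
The plan is to differentiate the $m$-fold product $A^m$ by the Leibniz rule, then exploit the total umbilicity of $\Sigma_0$ (where $A=\lambda I$) to collapse the resulting sum of $m$ terms into a single one.

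First, writing the $(1,1)$-tensor $(A^m)_i^j = A_i^{k_1} A_{k_1}^{k_2} \cdots A_{k_{m-1}}^j$ and applying the product rule, I obtain
\[
\partial_t (A^m)_i^j = \sum_{l=0}^{m-1} \bigl(A^l\,(\partial_t A)\,A^{m-1-l}\bigr)_i^j,
\]
which is valid for every $t \in [0,T)$.

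Next, I specialize to $t=0$. Since $\Sigma=\Sigma_0$ is totally umbilical, $A|_{t=0}=\lambda I$ and hence $A^k|_{t=0}=\lambda^k I$ for every $k \geq 0$; in particular, every power of $A$ is a scalar multiple of the identity at $t=0$ and therefore commutes with any $(1,1)$-tensor, including $\partial_t A$. Consequently each of the $m$ summands in the Leibniz expansion equals $\lambda^{m-1}(\partial_t A)_i^j|_{t=0}$, and the sum collapses to
\[
\partial_t (A^m)_i^j|_{t=0} = m\,\lambda^{m-1}\,(\partial_t A)_i^j|_{t=0}.
\]

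Finally, I substitute the evolution equation \eqref{a-ij}, namely $\partial_t A_i^j = -[\mathrm{Hess}\,f + fA^2 + cfg]_i^j$. Rewriting the scalar factor $\lambda^{m-1}$ as $A^{m-1}|_{t=0}$ and distributing (noting that $A^{m-1}\cdot A^2 = A^{m+1}$ and $A^{m-1} g = A^{m-1}$ as $(1,1)$-tensors) yields the first identity \eqref{a-1}; replacing the remaining occurrences of $A$ by $\lambda I$ then gives \eqref{a-2}. I do not foresee a real obstacle: the only subtle point is in the middle step, where one must notice that umbilicity renders the ordering in $A^l(\partial_t A)A^{m-1-l}$ irrelevant, so that all $m$ Leibniz terms coincide and produce the single factor $m\lambda^{m-1}$. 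Once this commutativity is recognized, the rest is direct bookkeeping.
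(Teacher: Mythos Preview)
Your proof is correct and relies on the same key observation as the paper: at $t=0$ one has $A=\lambda I$, so powers of $A$ commute with $\partial_t A$ (equivalently, with $\text{Hess}\,f$). The only cosmetic difference is that the paper packages the computation as an induction on $m$ (splitting $A^{m+1}=A\cdot A^m$), whereas you apply the full Leibniz expansion at once; both routes collapse to $m\lambda^{m-1}\partial_t A|_{t=0}$ and then substitute \eqref{a-ij}.
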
  
        
        \begin{proof} We give the argument  by induction.
The conclusion holds for $m=1$.
Suppose $\partial_t (A^m)_i^j|_{t=0} = -m[(\text{\text{\text{Hess}}}  f )A^{m-1}+fA^{m+1}+cf(A^{m-1})]_i^j|_{t=0}$.  Then
\begin{eqnarray*}
\nonumber
& &\partial_t (A^{m+1})_i^j |_{t=0}\\
 &=& \partial_t [A_i^k(A^m)_k^j] |_{t=0}\\
\nonumber
&=& (\partial_t A_i^k) (A^m)_k^j |_{t=0}+ A_i^k  [\partial_t (A^m)_k^j]|_{t=0}\\
\nonumber
& = & -[\text{\text{Hess}} f +fA^2+cf I]_i^k (A^m)_k^j |_{t=0} \\
\nonumber &&\quad -mA_i^k[(\text{\text{\text{Hess}}}  f) A^{m-1}+fA^{m+1}+cf(A^{m-1})]_k^j|_{t=0}\\
\nonumber
& = & -(m+1)[(\text{\text{\text{Hess}}} f )A^{m}+fA^{m+2}+cf(A^{m})]_i^j|_{t=0}.
\end{eqnarray*}
By induction, \eqref{a-1} holds. Take $A=\lambda I$. Then

$$\partial_t(A^m)_i^j|_{t=0}=-m[\lambda^{m-1}(\text{\text{\text{Hess}}}  f) +f\lambda^{m+1}I+cf\lambda^{m-1}I]_i^j|_{t=0},$$
which is just \eqref{a-2}.

\end{proof}

\noindent Proposition \ref{a} implies the following
\begin{prop}
\begin{eqnarray}
\partial_t (\text{\r P}_r)_i^j|_{t=0} 
& = &\lambda^{r-1} \binom{n-2}{r-1} (\text{\r Hess} f)_i^j|_{t=0},\\
\label{var-pro2}
|\partial_t \text{\r P}_r|^2|_{t=0} & = &
\lambda^{2(r-1)} \binom{n-2}{r-1}^2 |\text{\r Hess} f|^2|_{t=0}, 
\end{eqnarray}
where $\text{\r Hess} f=\text{Hess}f-\frac{\Delta f}{n}I$.
\end{prop}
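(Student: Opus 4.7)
The plan is a direct computation: expand $(P_r)_i^j=\sum_{k=0}^{r}(-1)^k s_{r-k}(A^k)_i^j$, differentiate in $t$, and evaluate at $t=0$, where $\Sigma_0$ is the totally umbilical geodesic sphere so that $A=\lambda I$, $(A^k)_i^j=\lambda^k\delta_i^j$, and $s_m=\binom{n}{m}\lambda^m$. All products become scalar multiples of $\delta_i^j$, and the $t$-derivatives will assemble into two tensorial building blocks coming from $\text{Hess}\,f$ and $fI$.

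First I apply the Leibniz rule to $\partial_t(P_r)_i^j|_{t=0}$ (the terms with $k=0$ or $r-k=0$ drop because $s_0$ and $A^0$ are constant). Introducing the shorthand $\Phi_i^j:=(\text{Hess}\,f)_i^j+(\lambda^2+c)f\,\delta_i^j$, so that $\text{tr}\,\Phi=\Delta f+n(\lambda^2+c)f$, formula \eqref{var-sr1} gives $\partial_t s_{r-k}|_{t=0}=-\binom{n-1}{r-k-1}\lambda^{r-k-1}\,\text{tr}\,\Phi$, while Proposition \ref{a} gives $\partial_t(A^k)_i^j|_{t=0}=-k\lambda^{k-1}\Phi_i^j$. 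Substituting and pulling out $\lambda^{r-1}$, the coefficient of $(\text{tr}\,\Phi)\delta_i^j$ becomes $-\sum_{k=0}^{r-1}(-1)^k\binom{n-1}{r-1-k}$, which equals $-\binom{n-2}{r-1}$ by \eqref{combin-1} with $n-1$ and $r-1$ in place of $n$ and $r$; the coefficient of $\Phi_i^j$ becomes $-\sum_{k=1}^{r}(-1)^k\binom{n}{r-k}k=\binom{n-2}{r-1}$ by \eqref{combin-4}. Therefore
\begin{equation*}
\partial_t(P_r)_i^j|_{t=0}=\lambda^{r-1}\binom{n-2}{r-1}\bigl[\Phi_i^j-(\text{tr}\,\Phi)\,\delta_i^j\bigr].
\end{equation*}

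Next I pass to the traceless part via $\partial_t(\text{\r P}_r)_i^j|_{t=0}=\partial_t(P_r)_i^j|_{t=0}-\frac{1}{n}\partial_t(\text{tr}\,P_r)|_{t=0}\,\delta_i^j$. Since $\text{tr}\,P_r=(n-r)s_r$ by \eqref{tr-1}, one has $\partial_t(\text{tr}\,P_r)|_{t=0}=-(n-r)\binom{n-1}{r-1}\lambda^{r-1}\,\text{tr}\,\Phi$. The identity $\binom{n-2}{r-1}=\frac{n-r}{n-1}\binom{n-1}{r-1}$, a shift of \eqref{combin-3}, shows that the trace correction turns $\Phi_i^j-(\text{tr}\,\Phi)\delta_i^j$ into $\Phi_i^j-\frac{\text{tr}\,\Phi}{n}\delta_i^j$. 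This is the traceless projection of $\Phi$, in which the isotropic piece $(\lambda^2+c)f\,I$ annihilates against its own trace, leaving precisely $\text{Hess}\,f-\frac{\Delta f}{n}I=\text{\r Hess}\,f$. Squaring in the induced metric then yields the second formula.

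The main obstacle is purely combinatorial: one must correctly identify which of the two sums is of type \eqref{combin-1} and which of type \eqref{combin-4}, and verify that the trace correction coefficients match so that the factor $1/n$ (rather than $1$) emerges inside the brackets. Once the identities in Proposition \ref{combin} are applied carefully, the disappearance of the non-Hessian part of $\Phi$ under the traceless projection is automatic, and both claimed identities follow with no additional geometric input beyond the umbilicity of $\Sigma_0$.
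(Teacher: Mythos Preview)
Your argument is correct and follows essentially the same route as the paper: differentiate $P_r=\sum_{k=0}^{r}(-1)^k s_{r-k}A^k$ via the Leibniz rule, evaluate at $t=0$ using \eqref{var-sr1} and Proposition~\ref{a}, apply the combinatorial identities \eqref{combin-1} and \eqref{combin-4}, and then subtract the trace part using $\text{tr}\,P_r=(n-r)s_r$ together with $(n-r)\binom{n-1}{r-1}=(n-1)\binom{n-2}{r-1}$. Your introduction of the shorthand $\Phi=\text{Hess}\,f+(\lambda^2+c)fI$ is a nice organizational device that makes the cancellation of the isotropic part under the traceless projection transparent, but it does not change the substance of the computation.
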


\begin{proof} By the definition \eqref{def-pr} of $P_r$,
        \begin{eqnarray}
\nonumber
&&
\partial_t (P_r)_i^j|_{t=0} \\
\nonumber& = & \partial_t \left( \sum_{m=0}^{r-1} (-1)^m s_{r-m} (A^m)_i^j \right )|_{t=0}\\
\nonumber
& = & \sum_{m=0}^{r-1} (-1)^m ( \partial_t s_{r-m}) (A^m)_i^j|_{t=0} + \sum_{m=0}^{r} (-1)^m s_{r-m} \partial_t (A^m)_i^j|_{t=0}\\
\nonumber
& = & \sum_{m=0}^{r-1} (-1)^m \left [ - \binom{n-1}{r-m-1}\lambda^{r-m-1}(\Delta f+nf\lambda^2+ncf)\lambda^m I \right ] \\
& +& \sum_{m=0}^{r} (-1)^m \binom{n}{r-m} \lambda^{r-m}[-m \lambda^{m-1}(\text{\text{\text{Hess}}} f  + f\lambda^2I+cf I)].
\end{eqnarray}
By  \eqref{combin-1} and \eqref{combin-4},
\begin{eqnarray}
\nonumber
&&
\partial_t (P_r)_i^j|_{t=0} \\
\nonumber
& = & - \binom{n-2}{r-1}\lambda^{r-1}(\Delta f+nf\lambda^2+ncf)I+\lambda^{r-1} \binom{n-2}{r-1} [\text{\text{Hess}} f +f\lambda^2 I+ fcI]\nonumber \\
& = & -\lambda^{r-1} \binom{n-2}{r-1}[(\Delta f) I + (n-1)f\lambda^2I +(n-1)cfI -\text{\text{Hess}} f ]_i^j.\label{pr-evol-0}
\end{eqnarray}

\noindent Note $\text{\r P}_r = P_r - \frac{n-r}{n} s_r
g$ and  $\partial_t g_i^j = 0$. By \eqref{var-sr1} and   \eqref{pr-evol-0}, we have

\begin{eqnarray}
 \nonumber
&&\partial_t (\text{\r P}_r)_i^j|_{t=0} \\
\nonumber
&=& \big[\partial_t (P_r)_i^j - \frac{n-r}{n} (\partial_t s_r) g_i^j - \frac{n-r}{n} s_r \partial_t g_i^j\big]|_{t=0}\\
\nonumber
&=& -\lambda^{r-1} \binom{n-2}{r-1}[(\Delta f) I + (n-1)f\lambda^2I+(n-1)cfI - \text{\text{Hess}} f ]_i^j\\
\nonumber
& &+ \frac{n-r}{n} \binom{n-1}{r-1}\lambda^{r-1}(\Delta f+nf\lambda^2+ncf)I_i^j\\
\nonumber
&=&\lambda^{r-1} \binom{n-2}{r-1} \left [\text{Hess} f - \frac{1}{n}(\Delta f)I +(n-1)cf I\right ]_i^j \\
& = &\lambda^{r-1} \binom{n-2}{r-1} (\text{\r Hess} f)_i^j.\label{var-pro}
\end{eqnarray}
\eqref{var-pro} yields 
\begin{eqnarray}
\nonumber
|\partial_t \text{\r P}_r|^2 |_{t=0} & = & \sum_{i,j=1}^{n}\partial_t (\text{\r P}_r)_i^j |_{t=0} \partial_t (\text{\r P}_r)_j^i |_{t=0}\\
& = &\lambda^{2(r-1)} \binom{n-2}{r-1}^2 |\text{\r Hess} f|^2\label{var-po}
\end{eqnarray}
\end{proof}

\noindent Take $\varphi=s_r$ in Prop \ref{average}.   It holds  that
      \begin{eqnarray}
\nonumber
\partial_t \overline{s_r}|_{t= 0} &=& \big[\overline{fH(s_r - \overline{s_r})}+ \overline{\partial_t s_r}\big]|_{t= 0}= \overline{\partial_t s_r} |_{t= 0} \\
\nonumber
&=& - \lambda^{r-1}\binom{n-1}{r-1} \overline{\Delta f +n\lambda^2f+ncf }|_{t= 0}\\
\nonumber
&=& - \lambda^{r-1}\binom{n-1}{r-1} (n\lambda^2+c)\overline{f}
\end{eqnarray}
We will choose $f$ later so that $\int_{\Sigma} f= 0$.
For such $f$,  
\begin{equation}\partial_t \overline{s_r}|_{t=0} = 0.\label{var-sra}
\end{equation}

\noindent Using \eqref{var-sr1}, \eqref{var-pro2} and \eqref{var-sra}, by Proposition \ref{r-secvar-2}, we may calculate the second variation of $\mathcal{G}$ at $t=0$ as follows:

\begin{eqnarray}
\nonumber
&&\frac{1}{2}\frac{d^2}{dt^2}\mathcal{G}(\Sigma_t)|_{t=0}\\
& =& C^2 \int_\Sigma |\partial_t \text{\r P}_r|^2 |_{t= 0} - \int_\Sigma [\partial_t (s_r-\overline{s_r})]^2|_{t= 0}\\
\nonumber
&=&  \lambda^{2(r-1)}\left[C^2\binom{n-2}{r-1}^2 \int_\Sigma |\text{\r Hess} f|^2 - \binom{n-1}{r-1}^2 \int_\Sigma (\Delta f + nf\lambda^2+ncf)^2\right]\\
\nonumber
&=&\lambda^{2(r-1)} C^2 \binom{n-2}{r-1}^2 \left [ \frac{n-1}{n} \int_\Sigma (\Delta f)^2  +\frac{n-1}{sn_c^2( a)}\int_\Sigma f\Delta f \right ]\\
\nonumber
& & -\lambda^{2(r-1)} \binom{n-1}{r-1}^2 \left [\int_\Sigma (\Delta f)^2 +2n(\lambda^2+c)\int_\Sigma f \Delta f + n^2(\lambda^2+c)^2 \int_\Sigma f^2 \right ].
\end{eqnarray}

By \eqref{combin-3}, $(n-1) \binom{n-2}{r-1} = (n-r) \binom{n-1}{r-1}$. So it holds  that

\begin{eqnarray}
\nonumber
&&\frac{1}{2}\frac{d^2}{dt^2}\mathcal{G}(\Sigma_t)|_{t=0}\\
\nonumber
&=&\lambda^{2(r-1)} \binom{n-1}{r-1}^2   \bigg[\left (\frac{C^2(n-r)^2}{n(n-1)}-1 \right ) \int_\Sigma (\Delta f)^2\\
\nonumber
& & + \left (C^2 \frac{(n-r)^2}{sn_c^2( a)(n-1)} - 2n(\lambda^2+c) \right )\int_\Sigma f \Delta f 
-n^2(\lambda^2+c)^2 \int_\Sigma f^2    \bigg]\\
&=&\lambda^{2(r-1)} \binom{n-1}{r-1}^2\left \{\alpha \int_\Sigma (\Delta f)^2 + \beta \int_\Sigma f \Delta f + \gamma \int_\Sigma f^2\right \}\label{var-G}
\end{eqnarray}
where $\alpha = \frac{C^2(n-r)^2}{n(n-1)}-1$, $\beta=C^2 \frac{(n-r)^2}{sn_c^2( a)(n-1)} - 2n(\lambda^2+c)$, $\gamma = -n^2(\lambda^2+c)^2.$

If $C<\sqrt{\frac{n(n-1)}{(n-r)^2}}$, then $\alpha<0$. Similar to the proof of Theorem \ref{thm-2der}, we can prove that

\begin{thm} \label{thm-2derr}Let $M_c^{n+1}$  be a space form of the constant sectional curvature $c$. Fix $2\leq r\leq n-1$. Given $ C<\sqrt{\frac{n(n-1)}{(n-r)^2}}$, there exists a closed  totally umbilical hypersurface $\Sigma$ in $M$ and a deformation $F(\cdot,t)$ of $\Sigma$ such that 
$$\frac{d^2}{dt^2}\mathcal{G}(\Sigma_t)|_{t=0}<0,$$ where 
$\mathcal{G}(\Sigma_t)$ is given by \eqref{f-2}.

\end{thm}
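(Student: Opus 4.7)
The plan is to imitate the proof of Theorem \ref{thm-2der} almost verbatim, using the explicit formula \eqref{var-G} as the starting point. All of the heavy computation has already been done in deriving the quadratic form in $\int_\Sigma(\Delta f)^2$, $\int_\Sigma f\Delta f$, $\int_\Sigma f^2$; what remains is to choose $\Sigma$ and $f$ cleverly so that the sign of the leading coefficient $\alpha$ forces the second variation to be strictly negative.

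In the simply connected case, I would take $\Sigma = \mathbb{S}^n(a) \subset M_c^{n+1}$, the geodesic sphere of radius $a$ (with $0 < a < \pi/\sqrt{c}$ when $c>0$). By the same computation as in the proof of Theorem \ref{thm-2der}, $\Sigma$ is totally umbilical with constant principal curvature $\lambda$, and the induced metric is that of a round sphere of radius $sn_c(a)$, whose Laplacian has nonzero eigenvalues $\xi(k) = k(k+n-1)/sn_c^2(a)$, $k=1,2,\dots$, diverging to $+\infty$. I would choose $f$ to be a Laplace eigenfunction with $\Delta_g f = -\xi(k) f$, which automatically satisfies $\int_\Sigma f\, dv = 0$ and therefore legitimises \eqref{var-sra}.

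Plugging $\int_\Sigma (\Delta f)^2 = \xi(k)^2\int_\Sigma f^2$ and $\int_\Sigma f \Delta f = -\xi(k)\int_\Sigma f^2$ into \eqref{var-G} reduces the second variation to
$$\frac{1}{2}\frac{d^2}{dt^2}\mathcal{G}(\Sigma_t)\Big|_{t=0} = \lambda^{2(r-1)}\binom{n-1}{r-1}^2\bigl[\alpha\,\xi(k)^2 - \beta\,\xi(k) + \gamma\bigr]\int_\Sigma f^2,$$
a quadratic polynomial in $\xi(k)$. The assumption $C < \sqrt{n(n-1)/(n-r)^2}$ is precisely what makes $\alpha = \frac{C^2(n-r)^2}{n(n-1)} - 1 < 0$, so the bracket is negative for all $k$ large enough; choosing such a $k$ delivers the desired deformation. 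The non-simply-connected case is handled verbatim as in Theorem \ref{thm-2der}: pull back via the universal covering $\pi:\tilde M\to M_c^{n+1}$ (a local isometry), apply the simply connected result to a sufficiently small geodesic sphere $\tilde\Sigma$ inside a neighborhood $\Omega$ on which $\pi$ is an isometry, and push the resulting deformation down to $\Sigma = \pi(\tilde\Sigma)$.

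There is no genuine obstacle here beyond confirming the sign of $\alpha$; the coefficients $\beta$ and $\gamma$ only contribute lower-order terms in $\xi(k)$ and are therefore irrelevant in the limit $k\to\infty$. The whole argument is conceptually identical to that of Theorem \ref{thm-2der}, with $\mathcal{F}$ replaced by $\mathcal{G}$, the role of $\frac{n}{n-1}$ played by $\frac{n(n-1)}{(n-r)^2}$, and the coefficients $\alpha,\beta,\gamma$ reading off from \eqref{var-G}.
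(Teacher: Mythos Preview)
Your proposal is correct and matches the paper's own approach essentially verbatim: the paper simply observes that $C<\sqrt{\frac{n(n-1)}{(n-r)^2}}$ forces $\alpha<0$ in \eqref{var-G} and then declares the proof ``similar to the proof of Theorem \ref{thm-2der},'' which is exactly the eigenfunction-and-large-$k$ argument you spell out, followed by the universal-cover reduction for the non-simply-connected case.
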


\noindent {\it The proof of Theorem \ref{opt-2}}. 
Let $F(x,t)$ be the deformation given in Theorem \ref{thm-2derr} with $\Sigma_0=\mathbb{S}^n$ and $C=C_1<\sqrt{\frac{n(n-1)}{(n-r)^2}}$. Then 
 Theorem \ref{thm-2derr} implies that for $t$ sufficiently small, 
 \begin{equation}\label{ine-opt5-1}\int_{\Sigma_t}\left(s_r-\overline{s_r}\right)^2> C_1^2\int_{\Sigma_t}|P_r-\frac{(n-r)s_r}{n}g|^2.
\end{equation} 
Take $(\Sigma_1)_t=\Sigma_t$ for sufficiently small $t$. So  \eqref{ine-opt2} holds. 

Given $C_2<\sqrt{n}$, let $C_1^2=\frac{n}{(n-r)^2}(C_2^2-1)$. Then $C_1<\sqrt{\frac{n(n-1)}{(n-r)^2}}$. By \eqref{ine-opt5-1} and
the identity:
\begin{equation}|P_r-\frac{(n-r)\overline{s_r}}{n}g|^2=|P_r-\frac{(n-r)s_r}{n}g|^2+\frac{(n-r)^2}{n}(s_r-\overline{s_r})^2,
\end{equation}
it holds that there exists a deformation $\Sigma_t$, denoted by $(\Sigma_2)_t$ so that
\begin{eqnarray}
\nonumber\int_{\Sigma_t}|P_r-\frac{(n-r)\overline{s_r}}{n}g|^2&>&\left(1+\frac{(n-r)^2C^2_1}{n}\right)\int_{\Sigma_t}|P_r-\frac{(n-r)s_r}{n}g|^2\\
\label{ine-opt5-2}
&=& C_2^2\int_{\Sigma_t}|P_r-\frac{(n-r)s_r}{n}g|^2.
\end{eqnarray}
Clearly, the Ricci curvature $\text{Ric}_{\Sigma_t}$ of $\Sigma_t$ is positive for $t$ sufficiently small.

\qed

\noindent  Xu Cheng\\Insitituto de Matem\'atica\\Universidade
Federal Fluminense - UFF\\Centro, Niter\'{o}i, RJ 24020-140 Brazil
\\e-mail:xcheng@impa.br

\bigskip
\noindent  Areli V\'azquez Ju\'arez\\Insitituto de Matem\'atica\\Universidade
Federal Fluminense - UFF\\Centro, Niter\'{o}i, RJ 24020-140 Brazil
\\e-mail: areli.vazquez@gmail.com

\bigskip

\end{document}